\newcommand{\mathscripty}{\mathcal}
\newcommand{\rs}{\mathord{\upharpoonright}}
\newcommand{\e}{\epsilon}
\newcommand{\NN}{\mathbb{N}}
\newcommand{\CC}{\mathbb{C}}
\newcommand{\SB}{\mathscripty{B}}
\newcommand{\SC}{\mathscripty{C}}
\newcommand{\SD}{\mathscripty{D}}
\newcommand{\SF}{\mathscripty{F}}
\newcommand{\SU}{\mathscripty{U}}
\newcommand{\B}{B}
\newcommand{\er}{\mathbb R}
\newtheorem{theorem}{Theorem}[section]
\newtheorem*{theorem*}{Theorem}
\newtheorem{proposition}[theorem]{Proposition}
\newtheorem*{proposition*}{Proposition}
\newtheorem{lemma}[theorem]{Lemma}
\newtheorem*{lemma*}{Lemma}
\newtheorem{corollary}[theorem]{Corollary}
\newtheorem*{corollary*}{Corollar}
\newtheorem*{fact*}{Fact}
\theoremstyle{definition}
\newtheorem{definition}[theorem]{Definition}
\newtheorem*{definition*}{Definition}
\newtheorem{claim}[theorem]{Claim}
\newtheorem*{claim*}{Claim}
\newtheorem*{conjecture*}{Conjecture}
\newtheorem{theoremi}{Theorem}
\newtheorem{corollaryi}[theoremi]{Corollary}
\theoremstyle{remark}
\newtheorem*{example*}{Example}
\newtheorem{remark}[theorem]{Remark}
\newtheorem*{remark*}{Remark}
\newtheorem*{note*}{Note}
\newtheorem*{question*}{Question}
\newcommand{\set}[2]{\left\{#1\mathrel{}\middle|\mathrel{}#2\right\}}
\newcommand{\norm}[1]{\left\lVert #1 \right\rVert}
\DeclareMathOperator{\expn}{exp}
\DeclareMathOperator{\id}{id}
\DeclareMathOperator{\GL}{GL}
\begin{document}

\title{Ulam stability for some classes of C*-algebras}%
\author[P. McKenney]{Paul McKenney}
\address[P. McKenney]{Department of Mathematics \\
Miami University \\
501 E. High Street \\
Oxford, Ohio 45056 USA
}
\email{mckennp2@miamioh.edu}
\urladdr{}

\author[A. Vignati]{Alessandro Vignati}
\address[A. Vignati]{Department of Mathematics and Statistics\\
York University\\
4700 Keele Street\\
Toronto, Ontario\\ Canada, M3J
1P3\\
}
\email{ale.vignati@gmail.com}
\urladdr{www.automorph.net/avignati}

\subjclass[2010]{46L05, 46L10}
\keywords{Ulam stability, approximate homomorphism, near inclusion, AF algebras}

\date{\today}%

\maketitle
\begin{abstract}
We prove some stability results for certain classes of C*-algebras. We prove that whenever $A$ is a finite-dimensional
C*-algebra, $B$ is a C*-algebra and $\phi\colon A\to B$ is approximately a $^*$-homomorphism then there is an actual
$^*$-homomorphism close to $\phi$ by a factor depending only on how far is $\phi$ from being a $^*$-homomorphism and not
on $A$ or $B$.
 \end{abstract}
\section*{Introduction}
In this paper we prove an Ulam stability result in the category of C*-algebras.  Ulam stability results state that a map
which is ``almost'' a morphism must be close to an actual morphism; the exact definition of ``almost-morphisms'', and the
notion of closeness involved, vary with the result.  This area originated and take its name from the work of Ulam, who, in \cite[Chapter VI.1]{Ulam.PR}), was the first to pose stability questions.

While some results on the Ulam stability of group representations can be found in \cite{Kazhdan}, \cite[Section 3]{GKR.Jacobi} and, more recently,
\cite{BurgerOzawaThom.U}, we work instead in the category of C*-algebras, where the notion of an $\e$-morphism
necessarily involves all of the operations of a C*-algebra (see Definition~\ref{def:approximate-homo} below). Throughout the paper $\mathcal F$ will denote the class of finite-dimensional C*-algebras, $\mathcal {AF}$ the class of unital AF algebras, $\mathcal M$ the class of von Neumann algebras and $\mathcal C^*$ the class of all C*-algebras.

Our notion of Ulam stability is quite strong: given two classes $\mathcal C_1$ and $\mathcal C_2$ of C*-algebras, we say
that $(\mathcal{C}_1, \mathcal{C}_2)$ is Ulam stable if for all $\epsilon>0$ there is $\delta>0$ such that given $A\in\mathcal C_1$, $B\in\mathcal C_2$ and
a $\delta$-$^*$-homomorphism $\phi\colon A\to B$, we can find a $^*$-homomorphism
$\psi\colon A\to B$ which is $\e$-close to $\phi$ (see Definition \ref{def:ulamstab}). The same hypotheses were assumed by Farah in~\cite[Theorem 5.1]{Farah.AC}, where he proved a quantitative version of Ulam stability for $\mathcal C_1=\mathcal C_2=\mathcal F$.

In the first part of the paper, we extend Farah's result: Theorem~\ref{thmi:ulam-stability} below gives Ulam stability for $\mathcal{C}_1 =
\mathcal{F}$ and $\mathcal{C}_2 = \mathcal{C}^*$.

\begin{theoremi}\label{thmi:ulam-stability}
For all $\e>0$ there is $\delta>0$ such that given $F\in\mathcal F$, $A\in\mathcal C^*$ and an $\delta$-$^*$-homomorphism  $\phi\colon F\to A$, there exists a $^*$-homomorphism $\psi\colon F\to A$ such
that for all $x\in F_{\le 1}$,
\[
  \norm{\psi(x)-\phi(x)}<\e
\]
\end{theoremi}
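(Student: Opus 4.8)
The plan is to reduce to a single full matrix block, correct the image of the standard system of matrix units to an honest system of matrix units, and then—this is the crucial point—show that the resulting error is controlled \emph{independently} of the matrix size because all of the error terms live in mutually (approximately) orthogonal corners, so that norms of sums behave like suprema rather than like sums.

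First I would reduce to the unital, single-block case. Writing $F=\bigoplus_{i=1}^k M_{n_i}(\CC)$, the elements $\phi(1_i)$ are pairwise approximately orthogonal approximate projections summing to $\phi(1)$, so by continuous functional calculus on each and a (dimension-free) orthogonalisation they may be replaced by genuinely orthogonal projections $q_i\in A$ with $\norm{q_i-\phi(1_i)}$ small. Compressing into the corners $q_iAq_i$ and handling the blocks separately—their outputs again sitting in orthogonal corners—it suffices to produce, for a unital $\delta$-$^*$-homomorphism $\phi\colon M_n(\CC)\to A$ with $A$ unital and $\phi(1)=1$, a unital $^*$-homomorphism $\psi$ with $\norm{\psi(x)-\phi(x)}<\e$ on the unit ball, where $\delta$ depends only on $\e$.

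Next I would correct the matrix units. Let $\{e_{pq}\}$ be the standard matrix units and put $f_{pq}=\phi(e_{pq})$. The diagonal $f_{pp}$ are approximate projections, pairwise approximately orthogonal and (by linearity) summing to $1$; as above replace them by honest orthogonal projections $p_{pp}$ with $\sum_p p_{pp}=1$. Then $f_{p1}$ is an approximate partial isometry with $f_{p1}^*f_{p1}\approx p_{11}$ and $f_{p1}f_{p1}^*\approx p_{pp}$; cutting down by $p_{pp}(\cdot)p_{11}$ and taking the partial isometry from the polar decomposition of $p_{pp}f_{p1}p_{11}$ yields $v_p$ with $v_p^*v_p=p_{11}$, $v_pv_p^*=p_{pp}$, $v_1=p_{11}$, and $\norm{v_p-f_{p1}}$ small. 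Setting $\psi(e_{pq})=v_pv_q^*$ gives a genuine unital $^*$-homomorphism, since the $v_p$ have common source projection $p_{11}$ and orthogonal ranges summing to $1$. All of these operations (functional calculus on single self-adjoint elements, polar decomposition of the relevant cut-downs) stay inside $A$ or its unitisation, so the general C*-algebra codomain causes no trouble here; one may equally work in $A^{**}$ and observe that $\psi$ lands back in $A$.

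The main obstacle is to bound $\sup_{\norm{x}\le1}\norm{\psi(x)-\phi(x)}$ by a quantity depending on $\delta$ but not on $n$. By linearity $\psi(x)-\phi(x)=\sum_{p,q}x_{pq}(v_pv_q^*-f_{pq})$, and a term-by-term estimate only gives a useless bound of order $n^2\delta$. The point is that each error $D_{pq}=v_pv_q^*-f_{pq}$ is approximately supported in the corner $p_{pp}Ap_{qq}$: indeed $p_{pp}f_{pq}p_{qq}\approx\phi(e_{pp})\phi(e_{pq})\phi(e_{qq})\approx f_{pq}$ up to $O(\delta)$, while $p_{pp}(v_pv_q^*)p_{qq}=v_pv_q^*$ exactly. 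Writing $V=[\,v_1\ \cdots\ v_n\,]$ and $W=[\,f_{11}\ \cdots\ f_{n1}\,]$ as rows over $A$, one has $\psi(x)=V(x\otimes 1)V^*$ and, up to an $O(\delta)$ error coming from these orthogonal corner supports, $\phi(x)\approx W(x\otimes 1)W^*$; here $VV^*=1$ while $WW^*=\sum_p f_{p1}f_{p1}^*$ is within $O(\delta)$ of $1$, the distance being controlled again by the orthogonal-corner mechanism rather than by the triangle inequality. The difference is then estimated through the row-norm $\norm{V-W}=\norm{\sum_p(v_p-f_{p1})(v_p-f_{p1})^*}^{1/2}$, which is dimension-free precisely because the summands are left-supported in the orthogonal projections $p_{pp}$, so that the sum collapses to $\sup_p\norm{v_p-f_{p1}}$. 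Thus the entire estimate reduces to a supremum over $p,q$ of quantities of order $\delta$ (or $\sqrt\delta$ after the cut-downs), giving a bound that tends to $0$ with $\delta$ uniformly in $n$. Making the constants explicit—tracking how each cut-down and each polar decomposition degrades $\delta$, and verifying that every accumulation of error is genuinely of orthogonal-corner type—is the technical heart of the argument.
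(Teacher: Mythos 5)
Your proposal correctly isolates the crux of the theorem --- the bound must be uniform in the matrix size $n$ --- but the mechanism you invoke to get this (``errors live in orthogonal corners, so sums behave like suprema'') breaks down at exactly the steps that carry the weight. First, the assertion that the $f_{pp}=\phi(e_{pp})$ ``(by linearity) sum to $1$'', and likewise that $WW^*$ is within $O(\delta)$ of $1$, is not available: approximate additivity is a hypothesis about two elements at a time, so the axioms only yield $\norm{\phi(1)-\sum_p f_{pp}}\le (n-1)\delta$, and this is sharp. Concretely, if $\psi$ is an honest unital $^*$-homomorphism and $h$ is a scalar function with $0\le h\le 1$, then $\phi=\psi+\delta\, h(\cdot)1$ is a $4\delta$-$^*$-homomorphism; taking $h=1$ exactly on the matrix units $e_{p1}$, $p\ne 1$, and $0$ elsewhere gives $f_{p1}=\psi(e_{p1})+\delta 1$, so your construction returns $v_p=\psi(e_{p1})$ and the row norm is $\norm{V-W}=\delta\sqrt{n-1}$, while $\norm{WW^*-1}$ is also of order $\delta\sqrt n$. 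Here the theorem holds trivially ($\psi$ itself is $\delta$-close to $\phi$), yet your master inequality, which routes everything through $\norm{V-W}$, produces only a bound of order $\delta\sqrt n$: the mechanism genuinely leaks a factor $\sqrt n$. Second, the key claim that $\phi(x)\approx W(x\otimes 1)W^*$ up to $O(\delta)$ for \emph{every} $x$ in the unit ball is unjustified and is essentially the theorem itself: relating the value of the nonlinear map $\phi$ at a generic $x$ to its values on the matrix units requires on the order of $n^2$ applications of approximate additivity, and those error terms carry no corner structure at all. Third, even for errors that are \emph{exactly} corner-supported, doubly indexed sums do not collapse to suprema: if $D_{pq}=\delta\sigma_{pq}w_{pq}$ with $\sigma_{pq}$ signs and $w_{pq}$ genuine matrix units compatible with the corners, then $\sup_{\norm{x}\le 1}\norm{\sum_{p,q}x_{pq}D_{pq}}$ is the Schur multiplier norm of $(\delta\sigma_{pq})$, which for random signs is of order $\delta\sqrt n$. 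The collapse you describe is valid only for exactly block-diagonal families, or for a single row whose entries have exactly orthogonal supports on one side --- and your $f_{p1}$ enjoy such supports only approximately, with $n$ unstructured deviations of size $O(\delta)$ each.

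This is precisely why the paper's proof has a different shape: it never adds more than a bounded number of unstructured error terms. Irregularities of $\phi$ are removed by integrating against the Haar \emph{probability} measure on the compact group $\SU(F)$ (Proposition \ref{proposition:approximate-multiplicativity}); averaging is dimension-free because the measure has total mass $1$, not because of any orthogonality. The resulting genuine Borel homomorphism is unitarized, and Pettis plus Peter--Weyl give an exact decomposition $H=\bigoplus H_k$ into finite-dimensional blocks commuting \emph{exactly} with $\pi$, so the only unbounded ``sums'' that occur, such as $\psi'(u)-\pi(u)=\bigoplus_k\bigl(\psi_k(u)-p_k\pi(u)p_k\bigr)$, are exactly block diagonal and their norms genuinely are suprema. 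Farah's finite-dimensional theorem (Theorem \ref{thm:farahulam}) is applied blockwise; general elements are handled by writing them as combinations of \emph{four} unitaries (four terms, not $n^2$); and the blockwise-built homomorphism is conjugated back into $\sigma[A]$ by Christensen's near-inclusion theorem, whose constants are universal. If you want to salvage a matrix-unit argument, the missing ingredient is a dimension-free linearization lemma stating that a $\delta$-$^*$-homomorphism is determined, up to $O(\delta^{\alpha})$ on the whole unit ball, by its values on $\SU(F)$ or on the matrix units; nothing in your proposal supplies this, and supplying it is where all the difficulty of Theorem \ref{thm:ulam-stability} lies.
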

(Our Theorem \ref{thm:ulam-stability} gives a quantitative version of Theorem \ref{thmi:ulam-stability}, and the dependence of $\delta$ is terms of $\epsilon$ is specified.)

In the second part of the paper, we prove that Ulam stability passes to inductive limits in case the range
algebra is a von Neumann algebra (see Theorem \ref{thm:limits} for details). As a consequence, we obtain the following:
\begin{corollaryi}
  \label{cor:unitalAF}
  For all $\e>0$ there is $\delta>0$ such that given $A\in\mathcal {AF}$, $M\in\mathcal M$ and an
  $\e$-$^*$-homomorphism $\phi\colon A\to M$, there is a $^*$-homomorphism $\psi\colon A\to M$ such that for all $x\in
  A_{\le 1}$,
  \[
    \norm{\phi(x)-\psi(x)}<\e.
  \]
\end{corollaryi}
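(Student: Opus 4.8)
The plan is to realize the corollary as the combination of the two main results announced in the introduction: the finite-dimensional stability of Theorem~\ref{thmi:ulam-stability} and the transfer of stability to inductive limits recorded in Theorem~\ref{thm:limits}. The governing observation is that every unital AF algebra $A$ is, by definition, the norm closure $\overline{\bigcup_n A_n}$ of an increasing chain of unital finite-dimensional subalgebras $A_0\subseteq A_1\subseteq\cdots$ with the unit lying in $A_0$. So I would try to produce $\psi$ by approximating $\phi$ on each finite stage $A_n$ and then passing to the limit.

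First I would observe that since $\mathcal M\subseteq\mathcal C^*$, Theorem~\ref{thmi:ulam-stability} immediately yields that the pair $(\mathcal F,\mathcal M)$ is Ulam stable: the $\delta=\delta(\e)$ furnished there for targets in $\mathcal C^*$ works verbatim when the target happens to be a von Neumann algebra. Thus, given a $\delta$-$^*$-homomorphism $\phi\colon A\to M$, restricting it to $A_n$ produces a $\delta$-$^*$-homomorphism $\phi\rs A_n\colon A_n\to M$ out of a finite-dimensional algebra, to which Theorem~\ref{thmi:ulam-stability} applies and returns an honest $^*$-homomorphism $\psi_n\colon A_n\to M$ that is $\e$-close to $\phi$ on the unit ball of $A_n$.

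The hard part is that the $\psi_n$ need not cohere: $\psi_{n+1}\rs A_n$ will in general differ from $\psi_n$, so the naive union of the $\psi_n$ is not well defined, and there is no reason a priori that a single $^*$-homomorphism on all of $A$ should restrict to each $\psi_n$. This is exactly the coherence problem that Theorem~\ref{thm:limits} is designed to resolve, and it is where the hypothesis $M\in\mathcal M$ becomes essential: the weak-operator compactness of bounded sets in a von Neumann algebra, together with the good behaviour of its unitary group under the relevant limits, should allow one to select from the finite-stage approximations a single coherent family that defines a genuine $^*$-homomorphism on the dense subalgebra $\bigcup_n A_n$, which then extends by continuity to $\psi\colon A\to M$.

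Finally I would feed the stability of $(\mathcal F,\mathcal M)$ together with the presentation $A=\overline{\bigcup_n A_n}$ into Theorem~\ref{thm:limits} to conclude that $(\mathcal{AF},\mathcal M)$ is Ulam stable, and then unwind the definition of Ulam stability to obtain, for the given $\e$, a suitable $\delta>0$ and a $^*$-homomorphism $\psi\colon A\to M$ with $\norm{\phi(x)-\psi(x)}<\e$ for every $x\in A_{\le 1}$. The only genuine content beyond citing the two theorems is verifying that finite-dimensional algebras form an admissible class of building blocks for Theorem~\ref{thm:limits} and that the quantitative dependence of $\delta$ on $\e$ survives the passage to the limit; I expect the bulk of the work to be bookkeeping rather than new estimates, with all the analytic difficulty already absorbed into Theorem~\ref{thm:limits}.
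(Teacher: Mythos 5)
Your proposal is correct and follows exactly the paper's own route: Corollary~\ref{cor:unitalAF} is obtained by combining Theorem~\ref{thm:ulam-stability} (which gives Ulam stability of $(\mathcal F,\mathcal M)$ since $\mathcal M\subseteq\mathcal C^*$) with Theorem~\ref{thm:limits}, noting that finite-dimensional algebras are unital and nuclear and that $\mathcal D_{\mathcal F}=\mathcal{AF}$. The only caveat is your gloss of how Theorem~\ref{thm:limits} works internally: the WOT-ultralimit of the finite-stage approximants is not itself a $^*$-homomorphism on the dense union, only an approximately multiplicative completely positive map, which is then corrected using Johnson's theorem; but since you invoke Theorem~\ref{thm:limits} as a black box, this does not affect the validity of your argument.
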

(As above, we offer a quantitative version of Corollary \ref{cor:unitalAF} in Corollary \ref{cor:AF}.)

At the end of the paper we give a connection to perturbation theory, showing that a particular form of Ulam stability
for unital separable AF algebras is equivalent to the problem of whether, given two Kadison-Kastler close copies of a
separable AF algebra, there must be a $^*$-isomorphism between them which is uniformly close to the identity (see
Corollary \ref{cor:ulamvsperturb} for details). Phillips and Raeburn proved in~\cite{PhR.AF} that any two
Kadison-Kastler close AF algebras must be isomorphic, but they were not able to control the distance between the
isomorphism and the identity; this problem has been open since.  Later, in \cite{Christensen.NI}, it was proved (among
other things) that every two Kadison-Kastler close copies of an AF algebra are unitarily equivalent. This was extended
to separable nuclear C*-algebras in \cite{CSSWW} (see also \cite[Theorem 2.3]{HKW}), and it is known that separability
(see \cite{ChoiCh.NonSepNotGood}) is necessary.  The question of whether two nonnuclear algebras which are close in the
Kadison-Kastler metric are necessarily isomorphic is still open.

Ulam stability for C*-algebras is, as mentioned, closely connected to results on near inclusion; many results from both areas can be found in~\cite{Johnson.AMNM}, \cite{Johnson.NI}, \cite{Christensen.NI}, \cite{HKW} and \cite{CSSWW}, among others. Our results differ from these in that our notion of an approximate homomorphism does not require linearity; in~\cite{Johnson.AMNM}, Johnson discusses some of the difficulties that arise when the maps involved are not linear.  We also ask that the dependence between $\delta$ and $\epsilon$ is uniform over all algebras and maps involved. 

The motivation for considering such a wide and unnatural class of maps (i.e., nonlinear maps) is given by the study of automorphisms of corona
algebras (see \cite{Farah.AC}, \cite{Ghasemi.FDD}, \cite{McKenney.UHF} and the upcoming \cite{MKAV.AC}). In general,
Ulam stability results find applications in the theory of rigidity of quotients, where the goal is to find, under some
additional set theoretical assumption, some well behaved lifting for morphisms between quotient structures. Examples in a
discrete setting can be found in \cite{Farah.LIFU} or \cite{KanoveiReeken.BR}, while in the continuous setting
\cite[Theorem 5.1]{Farah.AC} was crucial in determining that under the Open Coloring Axiom all automorphisms of the
Calkin algebra are inner.

We would like to point out some obstructions that prevent us from extending Theorem \ref{thmi:ulam-stability} and
Corollary \ref{cor:unitalAF}. In the proof of Theorem \ref{thmi:ulam-stability}, and in particular in the
application of Proposition \ref{proposition:approximate-multiplicativity}, we make heavy use of the compactness of the unitary
group of a finite-dimensional C*-algebra; in particular, we take advantage of the Haar measure several times to perform
``averaging techniques'' that remove irregularities in the given $\e$-$^*$-homomorphism. If the group is not
compact (as is the unitary group of every infinite dimensional C*-algebra), such techniques fail. A more
specific explanation of the difficulties in obtaining stability results for noncompact groups may be found in
\cite{BurgerOzawaThom.U}.

As for Corollary~\ref{cor:unitalAF}, the necessity of having a weak-$*$-closed range was already noted in \cite[Theorem
3.1]{Johnson.AMNM}, where the range varies among dual Banach algebras. Similarly, in \cite[Section 4]{Christensen.NI},
having a von Neumann algebra in the range is crucial. For near inclusion phenomena (a particular case of Ulam stability) in
the absence of a weak-$*$-closed range, the sharpest result that has been obtained so far is \cite[Theorem 2.3]{HKW}.

We would like to thank Caleb Eckhardt, George Elliott, Ilijas Farah and Stuart White for the countless remarks and
suggestions.  In particular, we would like to thank George Elliott for suggesting the use of the Peter-Weyl theorem in
the proof of our main result, and Stuart White for suggesting the statement of Corollary \ref{cor:unitalAF}.

\section{The main result}
First we must introduce the precise definition of $\e$-$^*$-homomorphism that we will be using.
\begin{definition}
  \label{def:approximate-homo}
  A map $\phi \colon A\to B$ between C*-algebras is called an \emph{$\e$-$^*$-homomorphism} if for all $x,y\in A_{\le 1}$ and
  $\lambda\in \CC_{\le 1}$, we have
  \begin{gather*}
    \norm{\phi(x + y) - \phi(x) - \phi(y)} \le \e, \\
    \norm{\phi(\lambda x) - \lambda \phi(x)} \le \e, \\
    \norm{\phi(xy) - \phi(x)\phi(y)} \le \e, \\
    \norm{\phi(x^*) - \phi(x)^*} \le \e, \\
    \norm{\phi(x)} \le 1 + \e
\end{gather*}
\end{definition}
The notion of closeness that we will use is simply the metric induced by the uniform norm over the unit ball, which of
course coincides with the operator norm whenever the maps in consideration are linear:
\begin{definition}
  If $\phi : A\to B$ is a map between C*-algebras, then we will write $\norm{\phi}$ for the quantity
  \[
    \sup\set{\norm{\phi(x)}}{x \in A_{\le 1}}
  \]

A map $\phi\colon A\to B$ is called \emph{$\e$-isometric} if for all $x$ with $\norm{x}=1$ we have
\[
  \norm{\phi(x)}\in [1-\e,1+\e]
\]
and $\phi$ is said to be \emph{$\e$-surjective} if for all $b\in B_{\le 1}$ there is $a\in A$ with 
\begin{gather*}
  \norm{\phi(a) - b}\le \e.
\end{gather*}
We define an $\e$-$^*$-isomorphism to be an $\e$-isometric, $\e$-surjective $\e$-$^*$-homomorphism.
We say that a map $\phi$ is \emph{$\e$-nonzero} if there is $a\in A$ with $\norm{a}=1$ and $\norm{\phi(a)}\geq 1-\e$.
\end{definition}
\begin{remark}
\label{rmk:unital-contractive}
To aid our calculations later on, we will often assume that $\norm{\phi} \le 1$.  For our results, this gives no loss of
generality, since if $\phi$ is an $\e$-$^*$-homomorphism as defined above, and $\norm{\phi} > 1$, then $\psi =
\frac{1}{\norm{\phi}}\phi$ satisfies $\norm{\phi - \psi} \le \e$.  Similarly, if $A$ is unital and $\e$ is small enough,
then we may assume without loss of generality that $\phi(1)$ is a projection.  To see this, note that $\phi(1)$ is an almost-projection
and hence (by standard spectral theory tricks) is close to an actual projection $p\in B$.  Then by replacing $\phi(1)$
with $p$, we get a unital $\delta$-$^*$-homomorphism, where $\delta$ is polynomial in $\e$.
\end{remark}


It should be noted that the definition of an $\e$-$^*$-homomorphism provided in \cite{Farah.AC} was in fact our definition
of an $\e$-isometric $\e$-$^*$-homomorphism.  When $A$ is a full matrix algebra, and $\e$ is sufficiently small, being an $\e$-isometry is automatic:
\begin{proposition}
Suppose $\e < \frac{1}{100}$, $\ell\in\NN$, $B$ is a C*-algebra, and $\phi : M_\ell\to B$ is a $2\sqrt{\e}$-nonzero
$\e$-$^*$-homomorphism with $\norm{\phi} \le 1$.  Then $\phi$ is $2\sqrt{\e}$-isometric.
\end{proposition}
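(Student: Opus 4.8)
The plan is to prove the two inequalities $1-2\sqrt\e\le\norm{\phi(x)}\le 1+2\sqrt\e$ for $\norm x=1$ separately. The upper bound is immediate: since $\norm\phi\le 1$ we have $\norm{\phi(x)}\le 1\le 1+2\sqrt\e$. All the work is in the lower bound $\norm{\phi(x)}\ge 1-2\sqrt\e$, and I would first reduce it to a statement about minimal (rank-one) projections. From the $\e$-$^*$-homomorphism axioms one gets $\norm{\phi(x)^*\phi(x)-\phi(x^*x)}\le 2\e$, so $\norm{\phi(x)}^2\ge\norm{\phi(x^*x)}-2\e$; thus it suffices to bound $\norm{\phi(h)}$ below for a positive $h=x^*x$ with $\norm h=1$. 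A unit eigenvector of $h$ for the eigenvalue $1$ yields a minimal projection $q$ with $hq=q$, whence $\phi(q)=\phi(hq)\approx\phi(h)\phi(q)$ gives $\norm{\phi(q)}\bigl(1-\norm{\phi(h)}\bigr)\le\e$, i.e. $\norm{\phi(h)}\ge 1-\e/\norm{\phi(q)}$. So the whole proposition comes down to the single claim that every minimal projection $q$ satisfies $\norm{\phi(q)}\ge 1-O(\e)$.

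To attack that claim I would exploit that $\phi$ sends projections to \emph{almost-projections}: for any projection $p$ one has $\norm{\phi(p)-\phi(p)^*}\le\e$ and $\norm{\phi(p)^2-\phi(p)}\le\e$, so the self-adjoint part of $\phi(p)$ has spectrum within $O(\e)$ of $\{0,1\}$. Consequently $\norm{\phi(p)}$ is \emph{either} $\le O(\e)$ (``small'') \emph{or} $\ge 1-O(\e)$ (``large''), with a wide gap since $\e<\tfrac1{100}$. Two further observations pin down which case occurs. First, conjugation is cheap: if $p'=upu^*$ then $\phi(p')\approx\phi(u)\phi(p)\phi(u)^*$, so $\bigl|\norm{\phi(p')}-\norm{\phi(p)}\bigr|\le O(\e)$; hence all projections of a fixed rank fall into the same case. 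Second, the case is monotone in the rank: if $Q\le Q'$ then $Q=Q'Q$, so $\phi(Q)\approx\phi(Q')\phi(Q)$ forces $\norm{\phi(Q')}\ge 1-O(\e)$ whenever $\norm{\phi(Q)}\ge 1-O(\e)$. At the extremes we know the answer: $\norm{\phi(0)}\le\e$ is small, while the $2\sqrt\e$-nonzero hypothesis, via $\phi(a)\approx\phi(1)\phi(a)$ with $\norm a=1$, gives $\norm{\phi(1)}\ge 1-O(\e)$, which is large.

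Monotonicity then produces a threshold rank $r_0$ with $1\le r_0\le\ell$ such that projections of rank $<r_0$ are small and those of rank $\ge r_0$ are large. The key move is to inspect this threshold using approximate additivity \emph{once}: choose nested projections $Q_-\le Q$ of ranks $r_0-1$ and $r_0$, so that $Q=Q_-+e$ with $e$ minimal and orthogonal to $Q_-$. Then $\norm{\phi(Q)-\phi(Q_-)-\phi(e)}\le\e$, and since $\norm{\phi(Q)}\ge 1-O(\e)$ while $\norm{\phi(Q_-)}\le O(\e)$, the triangle inequality gives $\norm{\phi(e)}\ge 1-O(\e)$. Thus some minimal projection is large, and by the conjugation estimate every minimal projection is large. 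Feeding $\norm{\phi(q)}\ge 1-O(\e)$ back into the reduction yields $\norm{\phi(x)}^2\ge 1-O(\e)$, and taking square roots gives $\norm{\phi(x)}\ge 1-2\sqrt\e$ (the constants being comfortable because $\e<\tfrac1{100}$).

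The step I expect to be the crux — and the reason a naive argument fails — is precisely this passage at the threshold. One is tempted to bound $\norm{\phi(e)}$ for a minimal $e$ by writing $1=\sum_{i=1}^\ell e_{ii}$ and comparing $\phi(1)$ with $\sum_i\phi(e_{ii})$, but approximate additivity applied $\ell-1$ times yields only error $(\ell-1)\e$, which destroys any $\ell$-independent conclusion. The dichotomy-plus-monotone-threshold device circumvents this by extracting a large minimal projection with additivity used a single time, so all estimates stay uniform in $\ell$, as the statement requires. The only points needing care are the explicit constants in the almost-projection gap and in the conjugation estimate, so that the ``small'' and ``large'' windows remain disjoint and are respected by the $O(\e)$ perturbations throughout.
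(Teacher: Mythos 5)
Your proof is correct --- the constants close comfortably under $\e<\tfrac{1}{100}$ --- but it is not the paper's argument, and the differences are worth recording. What you share with the paper is the endgame: a small/large dichotomy for norms of images of projections, unitary equivalence to make the case depend only on the rank, the existence of a minimal ``large'' rank (with the $2\sqrt{\e}$-nonzero hypothesis forcing $\phi(1)$ to be large), a single application of approximate additivity at that threshold, and the compression trick (your $hq=q$, the paper's $pxp=p$). The divergence is in how the dichotomy is obtained and in the logical direction. The paper argues by contradiction, and its technical core is a descent claim: starting from a norm-one $x$ with $\norm{\phi(x)}\le 1-2\sqrt{\e}$ and iterating $s(a)=a^*a$, a quadratic recursion (controlled by a Taylor estimate) drives $\norm{\phi(s^{(k)}(x))}$ below $2\sqrt{\e}$; this simultaneously produces a rank-one projection with small image and yields the paper's dichotomy ($\le 2\sqrt{\e}$ or $\ge\tfrac12$). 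You bypass that machinery entirely: your dichotomy comes from spectral theory of almost-projections (the self-adjoint part of $\phi(p)$ has spectrum within $O(\e)$ of $\{0,1\}$), and your argument runs forwards rather than by contradiction --- the threshold split extracts a \emph{large} minimal projection, which you feed back through $\norm{\phi(x)}^2\ge\norm{\phi(x^*x)}-2\e$. Two consequences: your route is shorter, since the recursion analysis is the longest part of the paper's proof; and it proves more, namely that $\phi$ is $C\e$-isometric for a universal constant $C$ (of order $3$ or $4$), which implies the stated $2\sqrt{\e}$-isometry since $C\e<2\sqrt{\e}$ for $\e<\tfrac{1}{100}$ --- the $\sqrt{\e}$ in the paper's conclusion is an artifact of the losses in its iteration. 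The one point you leave implicit is that the conjugation estimate must be run in both directions (with $u$ and with $u^*$, writing $p=u^*p'u$) to get $\bigl|\norm{\phi(upu^*)}-\norm{\phi(p)}\bigr|\le O(\e)$, since $\phi(u)$ need not be unitary; this is routine given $\norm{\phi}\le 1$, but it is the step that makes ``all projections of a fixed rank fall into the same case'' legitimate.
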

\begin{proof}
Suppose that there is $x$ of norm $1$ with $\norm{\phi(x)}\le 1-2\sqrt{\e}$.  Note that for any $a\in A_{\leq 1}$,
\[
  \left|\norm{\phi(a^*a)} - \norm{\phi(a)}^2\right| \le 2\e
\]
(Here we are using the fact that $\norm{\phi} \le 1$.)  Let $s(a) = a^* a$ for all $a\in A$.
\begin{claim}
  There is an $n\in\NN$ such that $\norm{\phi(s^{(n)}(x))}\le 2\sqrt{\e}$.
\end{claim}
\begin{proof}
Let $k\in\NN$.  Observe that
\begin{equation}
  \label{eq-induction} (1 - k\sqrt{\e})^2 + 2\e \le 1 - (k+1)\sqrt{\e}
\end{equation}
if and only if
\[
  k^2 - \frac{1}{\sqrt{\e}} k + \left(2 + \frac{1}{\sqrt{\e}}\right) \le 0
\]
if and only if
\[
  \frac{1}{2\sqrt{\e}}(1 - \tau) \le k \le \frac{1}{2\sqrt{\e}}(1 + \tau)
\]
where
\[
  \tau = \sqrt{1 - 4\left(2\e + \sqrt{\e}\right)}.
\]
By Taylor's theorem, and our assumption that $\e < \frac{1}{100}$, we have
\[
  \tau \ge 1 - 2(2\e + \sqrt{\e}) - (2\e + \sqrt{\e})^2 \ge 1 - 4\sqrt{\e}.
\]
It follows that the inequality~\eqref{eq-induction} holds for all positive integers $k$ in the range
\[
  2 \le k \le \frac{1}{\sqrt{\e}} - 2.
\]
Since $x$ is such that $\norm{\phi(x)}\leq 1-2\sqrt\e$, we have that 
\[
\norm{\phi(s(x))}\leq\norm{\phi(x)}^2+2\e\leq 1-4\sqrt\e+4\e\leq 1-2\sqrt\e.
\]
By repeatedly applying $s$, for $k\leq\frac{1}{\sqrt\e}-2$, we get that 
\[
  \norm{\phi(s^{(k)}(x))} \le 1- (k+1)\sqrt{\e}.
\]
In particular, if $n$ is the maximal integer smaller than $\frac{1}{\sqrt{\e}} - 2$, then
\[
  \norm{\phi(s^{(n)}(x))} \le 1 - \left(\frac{1}{\sqrt{\e}} - 2\right)\sqrt{\e} = 2\sqrt{\e},
\]
as required.
\end{proof}

Replacing $x$ with $s^n(x)$, we can assume that $x$ is positive, $\norm{\phi(x)}\leq 2\sqrt{\e}$ and $1\in\sigma(x)$. In particular, there is a projection $p\in M_\ell$ of rank $1$ such that
$pxp=p$.  Then,
\[
  \norm{\phi(p)} = \norm{\phi(pxp)}\le \norm{\phi(p)}^2\norm{\phi(x)}+2\e\le 2\sqrt{\e} + 2\e,
\]
which implies $\norm{\phi(p)}\leq\frac{1}{4}$. For the same reason as before, since $s^n(p)=p$ for all $n$, we have $\norm{\phi(p)}\leq 2\sqrt\e$. As every two projections of the same rank in $M_\ell$ are unitarily
equivalent, every projection of rank $1$ has image of small norm. Note that for every projection $p\in M_l$, either $\norm{\phi(p)}\leq2\sqrt\e$ or $\norm{\phi(p)}\geq \frac{1}{2}$.

Let $j\leq \ell$ be the minimum such that there is a projection $p$ of rank $j$ with $\norm{\phi(p)}\ge\frac{1}{2}$.
Since $\norm{\phi(1)}\ge \frac{1}{2}$, $j$ exists and, by the above, $j>1$. Let $q_1,q_2$ be projections of rank smaller
than $j$, with $p=q_1+q_2$. We have
\[
  \norm{\phi(p)}\leq\norm{\phi(q_1)+\phi(q_2)}+\e\leq\norm{\phi(q_1)}+\norm{\phi(q_2)}+\e<4\sqrt\e+\e<\frac{1}{2}
\] 
a contradiction to $\norm{\phi(p)}\ge\frac{1}{2}$.
\end{proof}
We now can give the definition of stability we are going to use throughout the paper.
\begin{definition}\label{def:ulamstab}
 Let $\SC$ and $\SD$ be two classes of C*-algebras.  We say that the pair $(\SC,\SD)$ is \emph{Ulam stable} if for every
 $\e > 0$ there is a $\delta > 0$ such that for all $A\in\SC$ and $B\in\SD$ and for every $\delta$-$^*$-homomorphism
 $\phi : A\to B$, there is a $^*$-homomorphism $\psi : A\to B$ with $\norm{\phi - \psi} < \e$.

\end{definition}
Recall that $\mathcal F$ denotes the class of finite-dimensional C*-algebras, and $\mathcal C^*$ the class of all C*-algebras. The following is Theorem \ref{thmi:ulam-stability} above:
\begin{theorem}\label{thm:ulam-stability}
There are $K,\delta>0$ such that given $\e<\delta$, $F\in\mathcal F$, $A\in\mathcal C^*$ and an $\e$-$^*$-homomorphism $\phi\colon F\to A$, there exists a $^*$-homomorphism $\psi\colon F\to A$ with \[\norm{\psi-\phi}<K\e^{1/2}.\]
Consequently, the pair $(\mathcal F,\mathcal C^*)$ is Ulam stable.
\end{theorem}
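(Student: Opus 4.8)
The plan is to reduce everything to the rigidity of approximate unitary representations of the compact group $U(F)$, and to exploit that compactness (equivalently, amenability) is exactly what makes the constants independent of $\dim F$.

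First I would normalize. By Remark~\ref{rmk:unital-contractive} I may assume $\norm{\phi}\le 1$, that $F$ is unital, and that $\phi(1)=p$ is a projection; passing to the corner $pAp$ I may assume $\phi$ is unital. A Hyers--Ulam argument for the two linear axioms (additivity and scaling) replaces $\phi$ by a genuinely $\CC$-linear contraction $\phi_1\colon F\to A$ with $\norm{\phi_1-\phi}=O(\e)$, still an $O(\e)$-$^*$-homomorphism; linearity on the finite-dimensional space $F$ makes $\phi_1$ norm-continuous, which I need in order to integrate it. Write $F=\bigoplus_m M_{n_m}$ and let $G=U(F)=\prod_m U(n_m)$, a compact group carrying a normalized Haar measure $\mu$. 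The decisive structural point is that, because $G$ is compact, a Haar integral $\int_G\alpha(u)\,d\mu(u)$ of a norm-continuous $A$-valued function $\alpha$ is again an element of $A$; this is what keeps the construction inside $A$ (and is exactly what fails for noncompact unitary groups, forcing the von Neumann hypothesis in Corollary~\ref{cor:unitalAF}).

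Next, the heart of the argument. Restricting $\phi_1$ to $G$ gives an approximate unitary representation $\pi=\phi_1\rs G\colon G\to A$: each $\pi(u)$ is within $O(\e)$ of a unitary of $pAp$ (since $\pi(u)^*\pi(u)\approx\phi_1(u^*u)=1$), and $\norm{\pi(uv)-\pi(u)\pi(v)}=O(\e)$ for all $u,v\in G$. I would then apply the averaging construction of Proposition~\ref{proposition:approximate-multiplicativity}: forming a Haar average such as $\int_G\pi(uv)\pi(v)^*\,d\mu(v)$, which stays within $O(\e)$ of $\pi(u)$ and is multiplicative up to a genuine coboundary, and cleaning up by polar decomposition, produces an honest unitary representation $\rho\colon G\to U(pAp)\subseteq A$ with
\[
  \sup_{u\in G}\norm{\rho(u)-\phi(u)}\le K'\e^{1/2}.
\]
The one thing that must be checked with real care here is that $K'$ does not depend on $G$, hence not on $\dim F$: the invariance of $\mu$ produces cancellations that defeat the naive, dimension-dependent triangle-inequality bounds (an $n\times n$ matrix over $A$ with entries of norm $O(\e)$ can have norm $O(n\e)$, so any correction of the $\phi(e_{ij})$ one matrix unit at a time is hopeless). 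This uniformity is the crux of the whole proof and where essentially all the work lies.

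Finally I would promote $\rho$ to a $^*$-homomorphism and estimate. Using the exactness of $\rho$ together with the Peter--Weyl orthogonality relations, the matrix-coefficient averages $f^{(m)}_{ij}=n_m\int_G\overline{u^{(m)}_{ij}}\,\rho(u)\,d\mu(u)$ form an exact system of matrix units, and therefore define a genuine $^*$-homomorphism $\psi\colon F\to A$ with $\psi\rs G=\rho$ (the averaged $\rho$ carries only the defining irreducibles of $G$, since it is close to the near-multiplicative $\phi_1$). It then remains to pass from closeness on $G$ to closeness on the unit ball: by Russo--Dye every contraction $x\in F_{\le 1}$ is a mean of a fixed finite number of unitaries of $G$, so taking for concreteness $x=\tfrac12(u+v)$ with $u,v\in G$, the additivity and scaling axioms give $\norm{\phi(x)-\tfrac12\phi(u)-\tfrac12\phi(v)}\le 3\e$, while $\psi(x)=\tfrac12\rho(u)+\tfrac12\rho(v)$. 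Hence
\[
  \norm{\psi(x)-\phi(x)}\le \tfrac12\norm{\rho(u)-\phi(u)}+\tfrac12\norm{\rho(v)-\phi(v)}+3\e\le K'\e^{1/2}+3\e,
\]
which is $<K\e^{1/2}$ for a suitable absolute $K$ once $\e<\delta$ is small (the general contraction costs only a larger constant, absorbed into $K$). This yields the quantitative bound, and hence the Ulam stability of $(\mathcal F,\mathcal C^*)$.
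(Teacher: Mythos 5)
Your overall architecture (normalize, average over $\U(F)$ to produce an exact unitary representation, promote it to a $^*$-homomorphism via Peter--Weyl, then use Russo--Dye to pass from unitaries to the whole ball) parallels the paper's, and the idea of unitarizing \emph{inside} $pAp$ rather than in $\mathcal B(H)$ is attractive; but the two steps where all the difficulty of the theorem is concentrated are asserted rather than proved. The fatal one is the parenthetical claim that ``the averaged $\rho$ carries only the defining irreducibles of $G$.'' Granting an exactly multiplicative unitary representation $\rho\colon\U(F)\to\U(pAp)$ uniformly close to $\phi$, Schur orthogonality plus multiplicativity do show that the elements $E^{(m)}_{ij}=n_m\int\overline{u^{(m)}_{ij}}\rho(u)\,d\mu(u)$ satisfy the matrix-unit relations and hence define a $^*$-homomorphism $\psi\colon F\to pAp$; but substituting $w=v^{-1}u$ and using invariance of $\mu$ gives
\[
  \psi(u)=\rho(u)\,\psi(1),\qquad \psi(1)=\sum_m n_m\int\overline{\chi_m(u)}\rho(u)\,d\mu(u),
\]
so $\psi(1)$ is exactly the isotypic projection of the defining representations, and $\psi\rs\U(F)=\rho$ if and only if no other irreducible occurs in $\rho$. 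If any other does, then $\norm{\psi(1)-\phi(1)}=\norm{\psi(1)-1}=1$ and the construction fails outright. Closeness to the $\e$-homogeneous $\phi$ does force (already for $F=M_n$) every irreducible occurring in $\rho$ to have central character $\lambda\mapsto\lambda$, since a projection of norm less than $1$ is zero; but for $n\ge 2$ there are irreducibles with that central character other than the defining one (e.g.\ highest weight $(2,0,\dots,0,-1)$), and the naive Fourier estimate for their isotypic projections carries a factor of the dimension of the irreducible, hence is useless. Ruling these out with constants independent of $\dim F$ is precisely the content of Lemma~\ref{thm:unitary-lift} (Farah), which is a separate nontrivial argument available in the paper only for finite-dimensional codomain. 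The paper's proof sidesteps this issue entirely: it cuts $\sigma\circ\phi$ by the Peter--Weyl projections $p_k$ \emph{whatever} irreducibles appear there, applies Farah's finite-dimensional theorem (Theorem~\ref{thm:farahulam}) on each block $\mathcal B(H_k)$, and then conjugates $\bigoplus_k\psi_k$ back into $\sigma[A]$ via Christensen's near-inclusion theorem---which is also where the exponent $1/2$ in the statement actually comes from; your proposal contains no substitute for this step.

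The second unproved step is the Hyers--Ulam linearization with a dimension-free constant. Approximate additivity is hypothesized only on the unit ball, so the classical iteration $x\mapsto\lim_n 2^{-n}\phi(2^nx)$ is unavailable; extending $\phi$ homogeneously first makes the additivity defect grow linearly with the scale, after which the telescoping series diverges. Restricted-domain stability theorems exist, but uniformity in $\dim F$ is exactly the delicate point, and the paper deliberately never linearizes $\phi$: it only discretizes it to a Borel map to make it integrable (the introduction flags nonlinearity as the feature separating this theorem from Johnson's AMNM results, where linearity is assumed). Finally, two smaller inaccuracies: a single Haar average does not produce an exact representation---Proposition~\ref{proposition:approximate-multiplicativity} improves the multiplicativity defect only quadratically, and the paper must iterate it infinitely often and pass to a limit; and polar decomposition of the individual values $\rho(u)$ does not preserve multiplicativity---the correct way to unitarize inside $pAp$ is to conjugate by $S^{1/2}$ with $S=\int\rho(u)^*\rho(u)\,d\mu(u)$. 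These last two points are repairable, and had the first two gaps been filled your argument would constitute a genuinely different, more self-contained proof (avoiding both Farah's theorem and Christensen's); as written, however, it assumes exactly the statements where the theorem lives.
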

The proof goes through successive approximations of an $\e$-$^*$-homomorphism $\phi$ with increasingly nice properties. Each step will consist of an already-known approximation result; our proof will thus consist of stringing each of these results together, sometimes with a little work in between.  Before beginning the proof we describe some of the tools we will use.

The following Proposition is essentially proved in~\cite[Proposition~5.14]{AGG}; one can also find similar ideas in the proof of~\cite[Proposition~5.2]{Kazhdan}.  Our version is slightly more general,
in that the values of $\rho$ are taken from the invertible elements of a separable Banach algebra, and $\rho$ is allowed to be just Borel measurable.  In our proof, we will need the Bochner integral,
which is defined for certain functions taking values in a Banach space.  For an introduction to the Bochner integral and its properties, we refer the reader to~\cite[Appendix~E]{Cohn}.  For our
purposes, we note that the Bochner integral is defined for any measurable function $f$ from a measure space $(X,\Sigma,\mu)$ into a separable Banach space $E$ such that the function $x\mapsto
\norm{f(x)}$ is in $L^1(X,\Sigma,\mu)$, and in this case,
\[
  \int f(x)\,d\mu(x) \in E
\]
and
\[
  \norm{\int f(x)\,d\mu(x)} \le \int \norm{f(x)}\,d\mu(x).
\]
Moreover, if $G$ is a compact group and
$\mu$ is the Haar measure on $G$, then for any Bochner-integrable $f : G\to E$ and $g\in G$ we have
\[
  \int f(x)\,d\mu(x) = \int f(gx)\,d\mu(x).
\]
\begin{proposition}
  \label{proposition:approximate-multiplicativity}
  Suppose $A$ is a separable Banach algebra, $G$ is a compact group, and $\rho \colon G\to \GL(A)$ is a Borel-measurable map satisfying, for all $u,v\in G$,
  \[
    \norm{\rho(u)^{-1}} \le \kappa
  \]
  and
  \[
    \norm{\rho(uv) - \rho(u)\rho(v)} \le \e
  \]
  where $\kappa$ and $\e$ are positive constants satisfying $\e < \kappa^{-2}$.  Then there is a Borel-measurable $\tilde{\rho} \colon G\to\GL(A)$ such that
  \begin{enumerate}
    \item\label{rho:close} for all $u\in G$, $\norm{\tilde{\rho}(u) - \rho(u)} \le \kappa\e$,
    \item\label{rho:invertible} for all $u\in G$,
    \[
      \norm{\tilde{\rho}(u)^{-1}} \le \frac{\kappa}{1 - \kappa^2\e},
    \]
    and finally,
    \item\label{rho:multiplicative} for all $u,v\in G$,
    \[
      \norm{\tilde{\rho}(uv) - \tilde{\rho}(u)\tilde{\rho}(v)} \le 2\kappa^2\e^2.
    \]
  \end{enumerate}
\end{proposition}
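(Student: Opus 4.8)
The plan is to define $\tilde\rho$ by averaging against the Haar measure $\mu$ on $G$, setting
\[
  \tilde\rho(u) = \int_G \rho(us)\rho(s)^{-1}\,d\mu(s),
\]
where the integral is a Bochner integral. Before anything else I would check this is well defined: for fixed $u$ the integrand $s\mapsto\rho(us)\rho(s)^{-1}$ is Borel (since $\rho$ is Borel and multiplication and inversion are continuous on $\GL(A)$) and norm-bounded, because
\[
  \norm{\rho(us)\rho(s)^{-1}} \le \norm{\rho(us)-\rho(u)\rho(s)}\,\norm{\rho(s)^{-1}} + \norm{\rho(u)} \le \kappa\e + \norm{\rho(u)};
\]
as $G$ carries a finite measure this makes the integrand Bochner-integrable, and a routine joint-measurability argument shows that $u\mapsto\tilde\rho(u)$ is again Borel.

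Writing $\delta(u,s) = \rho(us)-\rho(u)\rho(s)$, so that $\norm{\delta(u,s)}\le\e$, the substitution $\rho(us) = \rho(u)\rho(s)+\delta(u,s)$ inside the integral yields the basic identity
\[
  \tilde\rho(u) = \rho(u) + E(u), \qquad E(u) = \int_G \delta(u,s)\rho(s)^{-1}\,d\mu(s).
\]
Since $\mu$ is a probability measure this gives $\norm{E(u)}\le\kappa\e$, which is exactly \ref{rho:close}. For \ref{rho:invertible} I would note that $\norm{\rho(u)^{-1}E(u)}\le\kappa^2\e<1$, so that $\tilde\rho(u) = \rho(u)\bigl(1+\rho(u)^{-1}E(u)\bigr)$ is invertible by a Neumann series, with $\norm{\tilde\rho(u)^{-1}}\le\kappa/(1-\kappa^2\e)$; in particular $\tilde\rho$ does take values in $\GL(A)$.

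The heart of the argument is \ref{rho:multiplicative}, and here the main point is that the naive estimate is only linear in $\e$: expanding as above gives $\tilde\rho(u)\tilde\rho(v) = \rho(u)\tilde\rho(v)+E(u)\tilde\rho(v)$, while substituting $t=vw$ and invoking the left-invariance of $\mu$ gives $\tilde\rho(uv) = \rho(u)\tilde\rho(v)+\int_G\delta(u,t)\rho(v^{-1}t)^{-1}\,d\mu(t)$, and each of the two leftover terms is separately only $O(\kappa\e)$. The trick is to combine them into a single integral
\[
  \tilde\rho(uv)-\tilde\rho(u)\tilde\rho(v) = \int_G \delta(u,t)\bigl[\rho(v^{-1}t)^{-1} - \rho(t)^{-1}\tilde\rho(v)\bigr]\,d\mu(t)
\]
and to show that the bracket is itself quadratically small. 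To do this I would left-multiply the bracket by $\rho(t)$ and use $\rho(t) = \rho(v)\rho(v^{-1}t)+\delta(v,v^{-1}t)$ to obtain
\[
  \rho(t)\bigl[\rho(v^{-1}t)^{-1}-\rho(t)^{-1}\tilde\rho(v)\bigr] = -E(v) + \delta(v,v^{-1}t)\rho(v^{-1}t)^{-1},
\]
whose norm is at most $2\kappa\e$; applying $\rho(t)^{-1}$ bounds the bracket by $2\kappa^2\e$, and integrating against $\delta(u,\cdot)$ (of norm at most $\e$) yields $\norm{\tilde\rho(uv)-\tilde\rho(u)\tilde\rho(v)}\le 2\kappa^2\e^2$, as required. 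The only genuine obstacle is this cancellation in \ref{rho:multiplicative}: one must recognize that the two separately-linear error terms nearly cancel and package the difference as a single integral whose bracket becomes quadratically small after the left-multiplication trick. The measurability bookkeeping for $\tilde\rho$ is routine but should not be skipped.
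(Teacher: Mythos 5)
Your proof is correct, and at its core it is the same Haar-averaging argument as the paper's, just mirrored: the paper defines $\tilde\rho(u)=\int\rho(x)^{-1}\rho(xu)\,d\mu(x)$, with the inverse on the left, whereas you average $\rho(us)\rho(s)^{-1}$ with the inverse on the right, and conditions \eqref{rho:close} and \eqref{rho:invertible} are then verified in both proofs by the identical triangle-inequality and Neumann-series arguments. Where you genuinely diverge is in the organization of the multiplicativity estimate \eqref{rho:multiplicative}. The paper splits $\tilde\rho(u)\tilde\rho(v)-\tilde\rho(uv)$ into two pieces: a double integral $I_1$ which is a product of two averaged error terms (hence bounded by $\kappa^2\e^2$ outright), and a cross-term $I_2$ which, after a change of variables using translation invariance, collapses to a single integral of a product of two pointwise-small factors, again bounded by $\kappa^2\e^2$. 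You instead invoke translation invariance at the start to write the whole difference as the single integral $\int\delta(u,t)\bigl[\rho(v^{-1}t)^{-1}-\rho(t)^{-1}\tilde\rho(v)\bigr]\,d\mu(t)$ and then bound the bracket pointwise by $2\kappa^2\e$ via the left-multiplication trick; that trick is the same ``sandwich'' identity the paper uses for $I_2$, there in the form $\rho(xu^{-1})^{-1}-\rho(u)\rho(x)^{-1}=\rho(xu^{-1})^{-1}\bigl(\rho(x)-\rho(xu^{-1})\rho(u)\bigr)\rho(x)^{-1}$. Your packaging is arguably cleaner --- one integral, one pointwise estimate, no double integral --- at the cost of hiding the structure of the cancellation; the paper's $I_1+I_2$ split makes explicit that one quadratic contribution comes from ``error times error'' and the other from translation invariance. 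Both routes land on exactly the constant $2\kappa^2\e^2$ required by the statement, and you also attend to the well-definedness and measurability of the Bochner integral, which the paper dismisses with ``clearly.''
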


\begin{proof}
  Define
  \[
    \tilde{\rho}(u) = \int \rho(x)^{-1}\rho(xu)\,d\mu(x)
  \]
  where $\mu$ is the Haar measure on $G$, and the integral above is the Bochner integral.  Clearly, $\tilde{\rho}$ is Borel-measurable.  To check condition~\eqref{rho:close}, we have
  \begin{align*}
    \norm{\tilde{\rho}(u) - \rho(u)} & \le \int \norm{\rho(x)^{-1}\rho(xu) - \rho(u)}\,d\mu(x) \\
      & \le \int \norm{\rho(x)^{-1}}\norm{\rho(xu) - \rho(x)\rho(u)}\,d\mu(x) \le \kappa\e.
  \end{align*}
  Note now that
  \[
    \norm{1 - \tilde{\rho}(u)\rho(u)^{-1}} \le \norm{\rho(u) - \tilde{\rho}(u)}\norm{\rho(u)^{-1}} \le \kappa^2\e.
  \]
  By standard spectral theory, since $\rho(u)$ is invertible and $\norm{\tilde{\rho}(u) - \rho(u)} < 1$, we have that $\tilde{\rho}(u)$ is invertible too, and moreover
  \begin{align*}
    \norm{\tilde{\rho}(u)^{-1}} & \le \norm{\rho(u)^{-1}}\left(1 + \norm{1 - \tilde{\rho}(u)\rho(u)^{-1}} + \norm{1 - \tilde{\rho}(u)\rho(u)^{-1}}^2 + \cdots\right) \\
      & \le \frac{\kappa}{1 - \kappa^2\e}
  \end{align*}
  which proves condition~\eqref{rho:invertible}.  The real work comes now in proving condition~\eqref{rho:multiplicative}.  First, we note that
  \begin{eqnarray*}
    \tilde{\rho}(u)\tilde{\rho}(v) - \tilde{\rho}(uv) &=& \iint \left(\rho(x)^{-1}\rho(xu) \rho(y)^{-1} \rho(yv) - \rho(x)^{-1}\rho(xuv)\right)\,d\mu(x)\,d\mu(y)\\& =& I_1 + I_2
  \end{eqnarray*}
  where
  \[
    I_1 = \iint \left(\rho(x)^{-1}\rho(xu) - \rho(u)\right)\left(\rho(y)^{-1}\rho(yv) - \rho(v)\right)\,d\mu(x)\,d\mu(y)
  \]
  and
  \[
    I_2 = \iint \left(\rho(x)^{-1}\rho(xu)\rho(v) + \rho(u)\rho(y)^{-1}\rho(yv) - \rho(u)\rho(v) - \rho(x)^{-1}\rho(xuv)\right)\,d\mu(x)\,d\mu(y).
  \]
  For $I_1$ we have
  \[
    \norm{I_1} \le \iint \norm{\rho(x)^{-1}}\norm{\rho(xu) - \rho(x)\rho(u)}\norm{\rho(y)^{-1}}\norm{\rho(yv) - \rho(y)\rho(v)}\,d\mu(x)\,d\mu(y) \le \kappa^2\e^2.
  \]
  As for $I_2$, we have
  \[
    I_2 = \int \rho(x)^{-1} (\rho(xu)\rho(v) - \rho(xuv))\,d\mu(x) - \int (\rho(u)\rho(x)^{-1}\rho(x)\rho(v) - \rho(u)\rho(x)^{-1}\rho(xv))\,d\mu(x).
  \]
  Using the translation-invariance of $\mu$ on the first integral above to replace $xu$ with $x$, we see that
  \begin{align*}
    I_2 & = \int \rho(xu^{-1})^{-1} (\rho(x)\rho(v) - \rho(xv))\,d\mu(x) - \int \rho(u)\rho(x)^{-1}(\rho(x)\rho(v) - \rho(xv))\,d\mu(x) \\
      & = \int (\rho(xu^{-1})^{-1} - \rho(u)\rho(x)^{-1})(\rho(x)\rho(v) - \rho(xv))\,d\mu(x)
  \end{align*}
  Finally, note that
  \[
    \norm{\rho(xu^{-1})^{-1} - \rho(u)\rho(x)^{-1}} = \norm{\rho(xu^{-1})^{-1}(\rho(x) - \rho(xu^{-1})\rho(u))\rho(x)^{-1}} \le \kappa^2\e
  \]
  and
  \[
    \norm{\rho(x)\rho(v) - \rho(xv)} \le \e
  \]
  so we have that $\norm{I_2} \le \kappa^2\e^2$.  This proves condition~\eqref{rho:multiplicative}.
\end{proof}
Lastly, for convenience, we state Farah's result:
\begin{theorem}[Theorem 5.1, \cite{Farah.C}]\label{thm:farahulam}
There are constants $K_1,\gamma> 0$ such that whenever $\e <\gamma$, $F_1,F_2\in\SF$ and $\phi \colon F_1\to F_2$ is an $\e$-$^*$-homomorphism, there is a $^*$-homomorphism $\psi \colon F_1\to F_2$ with $\norm{\phi - \psi} < K_1 \e$.
  Hence, the pair $(\SF,\SF)$ is Ulam stable.
\end{theorem}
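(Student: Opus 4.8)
The plan is to control $\phi$ through its restriction to the unitary group, where compactness and Haar averaging are available, and then to transfer the resulting estimate back to the whole unit ball \emph{at the level of linear maps}, using that the ball is the closed convex hull of the unitaries. First I would invoke Remark~\ref{rmk:unital-contractive} to assume $\norm{\phi}\le 1$ and $\phi$ unital, and symmetrize via $\phi(x)\mapsto\tfrac12(\phi(x)+\phi(x^*)^*)$ to assume $\phi$ is self-adjoint; each adjustment costs only $O(\e)$. The crucial preliminary step is a \emph{linearization}: by the Hyers--Ulam stability of the Cauchy equation, combined with the approximate homogeneity built into the definition of an $\e$-$^*$-homomorphism, there is a genuinely linear $L\colon F_1\to F_2$ with $\sup_{\norm{x}\le 1}\norm{\phi(x)-L(x)}=O(\e)$; the point is that the Hyers bound is independent of $\dim F_1$ and $\dim F_2$. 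This $L$ remains approximately multiplicative, unital, self-adjoint, and contractive.

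Next I would pass to the compact group $G=U(F_1)$. Since $L$ is linear, unital and self-adjoint, for a unitary $u$ we have $L(u)L(u)^*\approx L(uu^*)=L(1)=1$, so $L(u)$ lies within $O(\e)$ of an actual unitary; correcting spectrally yields a Borel map $\rho\colon G\to\GL(F_2)$ that is unitary-valued (so $\norm{\rho(u)^{-1}}=1$), satisfies $\norm{\rho(u)-L(u)}=O(\e)$, and has multiplicative defect $\norm{\rho(uv)-\rho(u)\rho(v)}=O(\e)$. Now Proposition~\ref{proposition:approximate-multiplicativity} applies with $A=F_2$ and $\kappa=1$: it replaces $\rho$ by a map whose defect is \emph{quadratically} smaller while moving it by only $O(\e)$ and keeping the inverse bound near $1$. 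Iterating and summing the doubly-exponentially decaying corrections produces an exact continuous homomorphism $\pi\colon G\to\GL(F_2)$ with $\sup_u\norm{\pi(u)-L(u)}=O(\e)$, all constants independent of the dimensions. A continuous homomorphism of a compact group into $\GL(F_2)$ is conjugate to a unitary representation, and since $\pi$ is uniformly close to unitaries this conjugation is $O(\e)$-close to the identity, so we may take $\pi$ unitary-valued.

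It remains to recognize $\pi$ as the restriction of a $^*$-homomorphism. Viewing $F_2\subseteq B(H)$ with $H$ finite-dimensional, unitality makes $u\mapsto\pi(u)$ a genuine unitary representation of $U(F_1)=\prod_s U(n_s)$ on $H$. Its matrix coefficients $\langle\pi(u)\xi,\eta\rangle$ are, uniformly in $u$, within $O(\e)$ of $\langle L(u)\xi,\eta\rangle$, which are $\CC$-linear in the entries of $u$. By the Peter--Weyl theorem, the matrix coefficients of any irreducible summand of $\pi$ other than the defining representation of some factor $U(n_s)$ are orthogonal in $L^2(G)$ to the constants and to the linear functions $u^{(s)}_{ij},\overline{u^{(s)}_{ij}}$; a normalized such coefficient then has $L^2$-distance $1$ to the linear-plus-constant functions, contradicting the uniform $O(\e)$ bound once $\e$ is small. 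Hence $\pi$ is a sum of copies of the defining representations and extends to a genuine $^*$-homomorphism $\psi\colon F_1\to F_2$, whose multiplicities are integers rigidly determined by $\pi$. Since $\psi\rs G=\pi$, we get $\sup_u\norm{L(u)-\psi(u)}=O(\e)$; as $L$ and $\psi$ are linear and the unit ball of $F_1$ is the closed convex hull of $G$ (Russo--Dye), the supremum over the ball equals the supremum over $G$, so $\norm{L-\psi}=O(\e)$. Combining with the linearization, $\norm{\phi-\psi}\le\norm{\phi-L}+\norm{L-\psi}=O(\e)$, which is the asserted bound $K_1\e$, and Ulam stability of $(\SF,\SF)$ follows.

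The main obstacle is the third paragraph: identifying $\pi$ as a multiple of the defining representation, hence as coming from an algebra map, with constants that do not degrade as the $n_s$ grow. The linearity-in-$u$ argument together with Peter--Weyl is what makes this dimension-free, and the $L^2$-orthogonality contradiction involves no dependence on $n$. By contrast, a naive attempt to perturb the individual images $\phi(e_{ij})$ to exact matrix units would fail, since reconstructing $\phi$ on a norm-one element can involve $\sim n^2$ coefficients and the errors accumulate with the dimension. Secondary care is needed to keep the Hyers linearization and the convergence of the averaging iteration dimension-free, but these follow respectively from the classical dimension-independent Hyers constant and the quadratic contraction in Proposition~\ref{proposition:approximate-multiplicativity}; note that the compactness of $U(F_1)$ — hence finite-dimensionality of $F_1$ — is essential throughout.
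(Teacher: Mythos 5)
The paper does not actually prove this statement—it quotes it as a black box from Farah (\cite{Farah.C}, Theorem 5.1)—but your architecture (restrict to $\SU(F_1)$, iterate the Haar-averaging of Proposition~\ref{proposition:approximate-multiplicativity} to get an exact homomorphism $\pi$, unitarize, then recognize $\pi$ as the restriction of an algebra map) does parallel Farah's proof as the paper itself records in its ``alternative proof'' discussion; there, however, the last step is done via Stone's theorem on the one-parameter subgroups $\pi(\exp(ira))$ (Lemma~\ref{thm:unitary-lift}), not via Peter--Weyl. Your substitute for that step contains a genuine quantitative gap. By Schur orthogonality, a matrix coefficient $\langle\sigma(u)\xi,\eta\rangle$ of a $d$-dimensional irreducible $\sigma$ with unit vectors has $L^2(G)$-norm $d^{-1/2}$, not $1$; if instead you normalize the coefficient in $L^2$, then the uniform $O(\e)$ bound transfers to an $L^2$ bound of size $O(\e\sqrt{d})$. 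Either way, the orthogonality argument yields only $d^{-1/2}\le O(\e)$, i.e.\ it excludes unwanted irreducible summands of dimension at most roughly $\e^{-2}$, and nothing larger---so the step fails to be dimension-free, which is exactly the crux you flagged. Concretely, for $n\gtrsim\e^{-2}$ your argument does not exclude the conjugate defining representation $\bar{u}$ of $U(n)$: its coefficient $f(u)=\overline{u_{11}}$ has sup-norm $1$, is orthogonal in $L^2$ to the constants and to all the entries $u_{ij}$, but has $L^2$-norm $n^{-1/2}$, so no contradiction with the uniform $O(\e)$ bound arises. Yet $\bar u$ certainly does not extend to a $^*$-homomorphism, so this case must be ruled out.

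The step is repairable, but by a different mechanism than the one you wrote: restrict to a maximal torus $T$ of $\SU(F_1)$ and test against weight vectors. For a weight vector $\xi$ of weight $m$, the coefficient $\langle\pi(t)\xi,\xi\rangle=e^{i\langle m,\theta\rangle}$ restricted to $T$ is a single character of sup-norm \emph{one}, and uniform $O(\e)$-closeness to the linear function $\sum_j c_j e^{i\theta_j}$ contradicts $\norm{e^{i\langle m,\theta\rangle}-\sum_j c_j e^{i\theta_j}}_{L^2(T)}^2=1+\sum_j\abs{c_j}^2\ge 1$ unless $m$ is a standard basis vector; this pins every weight, hence every irreducible summand, to a defining representation of one of the blocks, with constants independent of all dimensions. (Alternatively, follow Farah's Stone's-theorem extension as in Lemma~\ref{thm:unitary-lift}.) A secondary but real issue is your linearization: classical Hyers--Ulam stability of the Cauchy equation requires the additive defect to be controlled globally, whereas an $\e$-$^*$-homomorphism controls it only on the unit ball, and the standard iteration $2^{-n}\phi(2^n x)$ immediately leaves the ball; you need the local, bounded-domain stability theorem (Skof-type), which does hold with an absolute constant, so this is a citation fix rather than a fatal flaw. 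With those two repairs the proposal becomes a legitimate dimension-free proof.
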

We are now ready to prove our main result.  In the proof we will make several successive modifications to $\phi$, and in
each case the relevant $\e$ will increase by some linear factor.  In order to keep the notation readable, we will call
the resulting $\e$'s $\e_1, \e_2, \ldots$
\begin{proof}[Proof of Theorem~\ref{thm:ulam-stability}]
Let $\gamma, K > 0$ witness Farah's Theorem.  Let $\delta \ll \gamma, 1/K$.  We will in particular require $\delta <
2^{-12}$.
Fix $\e < \delta$, $A\in\mathcal C^*$, $F\in\mathcal F$, and an
$\e$-$^*$-homomorphism $\phi \colon F \to A$.   As in Remark~\ref{rmk:unital-contractive}, we will assume that $A$ is
unital, $\phi(1) = 1$, and $\norm{\phi} \le 1$.

Let $X = \{x_0,\ldots,x_k\}$ be a finite subset of $F_{\le 2}$ which is $\e$-dense in $F_{\le 2}$ and which includes
$1$.  Define a map $\phi' \colon F_{\le 2} \to A$ by letting $\phi'(x) = \phi(x_i)$, where $i$ is the minimal integer
such that $\norm{x - x_i} < \e$.  Clearly, the range of $\phi'$ is just $\{\phi(x_0),\ldots,\phi(x_k)\}$, and if $B_i =
B(x_i,\e)\cap F_{\le 2}$, then
\[
  (\phi')^{-1}(\phi(x_i)) = B_i \setminus \bigcup_{j < i} B_j
\]
so $\phi'$ is a Borel map.  Moreover, $\norm{\phi'(x) - \phi(x)} < \e$ for all $x\in F_{\le 2}$.  It follows that
$\phi'$ is an $\e_1$-$^*$-homomorphism, where $\e_1 = 4\e$.  Note also that $\phi'(1) = 1$ and $\norm{\phi'} \le 1$.  Replacing $\phi$ with
$\phi'$ and $A$ with the C*-algebra generated by $\{\phi(x_0),\ldots,\phi(x_k)\}$, we may assume that $\phi$ is
Borel-measurable and $A$ is separable (at the expense of restricting the domain of $\phi$ to $F_{\le 2}$).

Since $\e_1 < 1$ and $\phi$ is unital, it follows that for every $u\in\SU(F)$, we have $\norm{\phi(u^{-1})\phi(u) -
1} < 1$ and hence that $\phi(u)$ is invertible, and $\norm{\phi(u)^{-1}} \le 2$.  Let $\rho_0$ be the restriction of $\phi$ to $\SU(F)$.   Applying
Proposition~\ref{proposition:approximate-multiplicativity} repeatedly, we may find a sequence of maps $\rho_n :
\SU(F)\to \GL(A)$ satisfying, for all $u,v\in \SU(F)$, 
\[
  \norm{\rho_n(uv) - \rho_n(u)\rho_n(v)} \le \delta_n \qquad \norm{\rho_{n+1}(u) - \rho_n(u)} \le \kappa_n\delta_n \qquad \norm{\rho_n(u)^{-1}} \le \kappa_n
\]
where $\delta_n$ and $\kappa_n$ are defined by letting $\delta_0 = \e_1$, $\kappa_0 = 2$, and
\[
  \delta_{n+1} = 2\kappa_n^2\delta_n^2 \qquad \qquad \kappa_{n+1} = \frac{\kappa_n}{1 - \kappa_n^2 \delta_n}
\]
\begin{claim}
  For each $n$, $\kappa_{n+1} - \kappa_n < 2^{-n}$ and $\delta_n \le 2^{5(1 - 2^n)}\e_1$.  Consequently, $\kappa_n < 4$ for all
  $n$, and
  \[
    \sum_{n=0}^\infty \kappa_n \delta_n < 8\e_1.
  \]
\end{claim}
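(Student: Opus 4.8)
The plan is to prove the two headline estimates—$\delta_n \le 2^{5(1-2^n)}\e_1$ and the increment bound $\kappa_{n+1}-\kappa_n < 2^{-n}$—together by a single induction on $n$, carrying the auxiliary bound $\kappa_n < 4$ as part of the inductive hypothesis. The base case is immediate, since $\delta_0 = \e_1 = 2^{5(1-2^0)}\e_1$ and $\kappa_0 = 2 < 4$. The one global input I use throughout is that $\e_1 = 4\e < 2^{-10}$, which follows from $\e < \delta < 2^{-12}$; this single smallness assumption is exactly what makes every estimate close.

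First I would handle the $\delta$-estimate. Assuming $\kappa_n < 4$ and $\delta_n \le 2^{5(1-2^n)}\e_1$, the recursion $\delta_{n+1} = 2\kappa_n^2\delta_n^2$ gives
\[
  \delta_{n+1} < 2\cdot 4^2 \cdot \bigl(2^{5(1-2^n)}\e_1\bigr)^2 = 2^{15-10\cdot 2^n}\e_1^2 = (2^{10}\e_1)\cdot 2^{5(1-2^{n+1})}\e_1,
\]
and since $2^{10}\e_1 < 1$ this is at most $2^{5(1-2^{n+1})}\e_1$, closing the induction for the $\delta$-bound. The point is that the recursion is quadratic in $\delta_n$, so the doubly-exponential rate $2^{5(1-2^n)}$ is the natural ansatz, and the factor $2^{10}\e_1 < 1$ is precisely what absorbs the constant $2\kappa_n^2 \le 32$ picked up at each squaring.

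Next I would bound the increments. Using $\kappa_n < 4$ together with the $\delta$-bound just maintained, one checks $\kappa_n^2\delta_n < 2^{9-5\cdot 2^n}\e_1 < 2^{-6}$, so $1 - \kappa_n^2\delta_n > \tfrac12$; then from $\kappa_{n+1} = \kappa_n/(1-\kappa_n^2\delta_n)$ we obtain
\[
  \kappa_{n+1} - \kappa_n = \frac{\kappa_n^3\delta_n}{1-\kappa_n^2\delta_n} < 8\kappa_n^2\delta_n < 2^{12-5\cdot 2^n}\e_1 < 2^{2-5\cdot 2^n} \le 2^{-n},
\]
the final inequality holding because $5\cdot 2^n \ge n+2$ for every $n$. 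Telescoping these increments against $\kappa_0 = 2$ then yields $\kappa_n < 2 + \sum_{m\ge 0} 2^{-m} = 4$ for all $n$, which is exactly the auxiliary hypothesis that keeps the induction running. Finally, for the series I would combine the two maintained bounds into $\kappa_n\delta_n < 4\cdot 2^{5(1-2^n)}\e_1 = 2^{7-5\cdot 2^n}\e_1$ and sum, using $2^n \ge n+1$ to dominate by the geometric series $2^7\e_1\sum_m 2^{-5(m+1)} = \tfrac{128}{31}\e_1$; this gives $\sum_n \kappa_n\delta_n < 8\e_1$ with room to spare.

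The only genuine subtlety—and the step I would be most careful about—is the mutual dependence between the bounds: controlling $\delta_{n+1}$ requires $\kappa_n < 4$, while establishing $\kappa_n < 4$ requires the increment bound, which in turn requires the $\delta_n$-estimate. I would resolve this apparent circularity by packaging $\delta_n \le 2^{5(1-2^n)}\e_1$ and $\kappa_n < 4$ as a \emph{joint} inductive hypothesis at stage $n$, deriving both the increment bound $\kappa_{n+1}-\kappa_n < 2^{-n}$ and the $\delta_{n+1}$-estimate from stage $n$ alone, and then obtaining $\kappa_{n+1} < 4$ from the telescoping $\kappa_{n+1} = \kappa_0 + \sum_{m=0}^{n}(\kappa_{m+1}-\kappa_m)$ whose summands were all established at earlier stages. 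No single estimate is hard; the care is entirely in the bookkeeping.
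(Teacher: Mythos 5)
Your proof is correct and follows essentially the same route as the paper's: the same induction with the ansatz $\delta_n \le 2^{5(1-2^n)}\e_1$, the same use of $\e_1 < 2^{-10}$ to absorb the constant $2\kappa_n^2$, the same increment identity $\kappa_{n+1}-\kappa_n = \kappa_n^3\delta_n/(1-\kappa_n^2\delta_n)$, and $\kappa_n < 4$ recovered by telescoping the increments (the paper phrases this as a strong induction rather than a joint hypothesis, a purely cosmetic difference). Your final summation via $2^n \ge n+1$ even gives the slightly sharper constant $\tfrac{128}{31}\e_1$ where the paper compares $2^{5(1-2^n)} \le 2^{-n}$ to get $8\e_1$.
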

\begin{proof}
  We will prove that $\kappa_{n+1} - \kappa_n < 2^{-n}$ and $\delta_n \le 2^{5(1 - 2^n)}\e_1$ by induction on $n$.  For the base
  case we note that $\delta_0 = \e_1 = 4\e < 2^{-10}$,
  \[
    \kappa_1 - \kappa_0 \le \frac{2}{1 - 2^{-8}} - 2 < 1.
  \]
  Now suppose $\kappa_0,\ldots,\kappa_n$ and $\delta_0,\ldots,\delta_n$ satisfy the induction hypothesis above.  Then we clearly
  have
  \[
    \kappa_n < 2 + 1 + \cdots + 2^{-(n-1)} < 4.
  \]
  Then using this fact and the assumption $\e_1 < 2^{-10}$,
  \[
    \delta_{n+1} = 2\kappa_n^2 \delta_n^2
      < 2(4^2) 2^{10(1 - 2^n)}\e_1^2
      < 2(4^2) (2^{-10}) 2^{10(1 - 2^n)}\e_1
      = 2^{5(1 - 2^{n+1})}\e_1.
  \]
  Moreover,
  \[
    \kappa_{n+1} - \kappa_n = \frac{\kappa_n^3 \delta_n}{1 - \kappa_n^2\delta_n} < \frac{(4^3) 2^{5(1 - 2^n)}\e_1}{1 -
    (4^2) 2^{5(1 - 2^n)}\e_1} < \frac{2^{1 - 5\cdot 2^n}}{1 - 2^{-1}} = 2^{2 - 5\cdot 2^n}
  \]
  Finally, note that $2 - 5\cdot 2^n \le -n$ for all $n\ge 0$.  This proves the first two parts of the claim.  We have
  already noted that $\kappa_n < 2 + 1 + \cdots + 2^{-(n-1)} < 4$.  As for the other sum, we have
  \[
    \sum_{n=0}^\infty \kappa_n \delta_n < 4\e_1 \sum_{n=0}^\infty 2^{5(1 - 2^n)} < 4\e_1 \sum_{n=0}^\infty 2^{-n} =
    8\e_1.
  \]
\end{proof}

It follows from the above claim that the map $\rho$ given by $\rho(u) = \lim \rho_n(u)$ is defined on $\SU(F)$, maps
into $\GL(A)$, and is multiplicative, Borel-measurable, and satisfies $\norm{\rho - \phi} \le  8\e_1 = \e_2$.

Fix a faithful representation $\sigma$ of $A$ on a separable Hilbert space $H$, and let $\tau \colon \SU(F)\to\GL(H)$ be
the composition $\sigma\circ\rho$.  Then $\tau$ is a group homomorphism which is Borel-measurable with respect to the
strong operator topology on $\B(H)$.  Moreover, $\norm{\tau(u)} \le 1 + \e_2$ and $\norm{\tau(u)^*\tau(u) - 1} \le
\e_2(4 + \e_2) = \e_3$
for all $u\in\SU(F)$.  Since $\SU(F)$ is compact, and hence unitarizable, it follows that there is a $T\in\GL(H)$ such
that $\pi(u) = T \tau(u) T^{-1}$ is unitary for every $u\in\SU(F)$, and moreover the proof in this case shows that we
may choose $T$ so that $\norm{T - 1} \le \e_3$.  It follows that
\[
  \norm{\pi(u) - \tau(u)} \le \frac{2(1 + \e_2)\e_3}{1 - \e_3} = \e_4.
\]

Recall that $\SU(F)$, with the norm topology, and $\SU(H)$, with the strong operator topology, are Polish groups; then,
by Pettis's Theorem (see e.g.,~\cite[Theorem~2.2]{Rosendal.AC}), it follows that $\pi$, a Borel-measurable group
homomorphism, is continuous with respect to these topologies.  By the Peter-Weyl Theorem, we may write $H = \bigoplus
H_k$, where each $H_k$ is finite-dimensional and $\pi\rs H_k$ is irreducible. In particular, if $p_k=\text{proj}(H_k)$,
we have that for every $k\in\NN$ and $u\in \SU(F)$, $[p_k,\pi(u)]=0$, and moreover $\pi(u)=\sum p_k\pi(u)p_k$.  Now,
recall that $\norm{\phi(u) - \rho(u)} \le \e_2$ for each $u\in\SU(F)$; hence
\[
  \norm{\sigma(\phi(u)) - \pi(u)} \le \norm{\sigma(\phi(u)) - \tau(u)} + \norm{\tau(u) - \pi(u)} \le \e_4 + \e_2.
\]
It follows that $\norm{[\sigma(\phi(u)), p_k]} \le 2(\e_4 + \e_2)$ for each $u\in\SU(F)$ and $k\in\NN$.  Since each element $a$
of a unital C*-algebra is a linear combination of 4 unitaries whose coefficients have absolute value at most $\norm{a}$,
we deduce that
\[
  \sup_{a\in F,\norm{a}\leq 1}\norm{[\sigma(\phi(a)),p_k]}\leq 8(\e_4 + \e_2) + 8\e_1
\]
Let $\phi_k$ be defined as 
\[
  \phi_k(a)=p_k((\sigma\circ \phi)(a))p_k.
\]
It is not hard to show that $\phi_k\colon F\to\SB(H_k)$ is an $\e_5$-$^*$-homomorphism, where $\e_5 = 8(\e_4 + \e_2) +
9\e_1$.  (In fact, $\phi_k$ is nearly an $\e_1$-$^*$-homomorphism; however, to check that $\phi_k(ab) -
\phi_k(a)\phi_k(b)$ is small we need the norm on the commutator computed above.)  By \cite[Theorem 5.1]{Farah.C} and our
choice of $\gamma$ and $K$, there is a $^*$-homomorphism $\psi_k\colon F\to\mathcal B(H_k)$ such that
$\norm{\phi_k-\psi_k} \le K\e_5$.

Consider now $\psi'=\bigoplus\psi_k$ and the C*-algebras $C = \psi'[F]$ and $B = \sigma[A]$. For every $u\in\mathcal
U(F)$, we have
\[
  \norm{\psi'(u)-\pi(u)}=\sup_k\norm{\psi_k(u)-p_k\pi(u)p_k} \le K\e_5 + \e_4 + \e_2.
\]
Since we also have $\norm{\pi(u) - \sigma(\phi(u))} \le \e_4 + \e_2$, it follows that $C \subset^{\e_6} B$, where $\e_6
= K\e_5 + 2\e_4 + 2\e_2$.
By~\cite[Theorem~5.3]{Christensen.NI}, there is a partial isometry $V\in\B(H)$ such that $\norm{V-1_H}<120\e_6^{1/2}$ and
$V C V^* \subseteq B$.  In particular, $V$ is unitary, and the $^*$-homomorphism $\eta \colon F\to B$ defined by
$\eta(a) = V \psi'(a) V^*$ satisfies $\norm{\eta(a) - \psi'(a)} < 240\e_6^{1/2}$.  Since $\sigma$ is injective, for every $x\in F$ we can define
\[
  \psi(x)=\sigma^{-1}(\eta(x)).
\]
Then $\psi$ is a $*$-homomomorphism mapping $F$ into $A$.
Moreover by construction we have that
\[
  \norm{\psi-\phi}<L\e^{1/2},
\]
where $L$ is a constant independent of $\e$, the dimension of $F$, $A$, and $\phi$.  This completes the proof.
\end{proof}
\subsubsection*{An alternative proof}
We would like to remark that it is possible to obtain the conclusion of Theorem~\ref{thm:ulam-stability} via a slightly different argument. This different approach gives us the occasion of remarking the following proposition, that is essentially contained in the proof of \cite[Theorem 5.1]{Farah.C}, but contains some small mistakes that we would like to correct for future reference. We therefore provide a sketch of the proof.
\begin{lemma}(Farah)
  \label{thm:unitary-lift}
  Let $\e > 0$ be sufficiently small, and let $F,G\in\mathcal F$.  Let $\pi
  \colon \SU(F)\to \SU(G)$ be a continuous group homomorphism such that there is some
  $\e$-$^*$-homomorphism $\phi \colon F\to G$ with $\norm{\phi(u) - \pi(u)}\le \e$ for all
  $u\in\SU(F)$.  Then $\pi$ extends to a $^*$-homomorphism from $F$ to $G$.
\end{lemma}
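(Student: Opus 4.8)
The plan is to regard $\pi$ as a finite-dimensional unitary representation of the compact group $\SU(F)$ and to use the approximate linearity and multiplicativity of $\phi$ to show that only the ``algebraic'' irreducible constituents survive; the extension is then assembled blockwise. First I would fix a faithful representation $G\subseteq\SB(H)$ on a finite-dimensional space respecting the block decomposition $H=\bigoplus_j\CC^{m_j}$ of $G=\bigoplus_j M_{m_j}$, so that $\pi$ splits as $\bigoplus_j\pi_j$. By the Peter--Weyl theorem each $\pi_j$ decomposes into irreducibles, and since the irreducibles of $\SU(F)=\prod_i\SU(M_{n_i})$ are external tensor products $\boxtimes_i V_{\lambda^{(i)}}$ of irreducibles $V_{\lambda^{(i)}}$ of the factors $\SU(M_{n_i})$, everything reduces to deciding which highest weights $\lambda^{(i)}$ can occur. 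After rescaling (Remark~\ref{rmk:unital-contractive}) I may assume $\norm{\phi}\le 1$, and $\pi(1)=1$ together with $\norm{\phi(1)-\pi(1)}\le\e$ makes $\phi$ approximately unital.

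The core technical step is a Fourier-coefficient analysis of one-parameter subgroups. For a projection $p\in F$ and $\zeta\in\TT$ set $u_p(\zeta)=(1-p)+\zeta p\in\SU(F)$. Writing $u_p(\zeta)$ as the sum of the unit-ball elements $1-p$ and $\zeta p$ and applying additivity, homogeneity, and approximate unitality yields $\norm{\phi(u_p(\zeta))-\big((1-\phi(p))+\zeta\phi(p)\big)}\le c\e$ for an absolute constant $c$, uniformly in $\zeta$, and hence the same estimate with $\pi$ in place of $\phi$. Now $t\mapsto\pi(u_p(e^{it}))$ is a continuous one-parameter unitary group of period $2\pi$, so it equals $\exp(itD_p)$ for a self-adjoint $D_p$ with integer spectrum. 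Comparing the Fourier coefficients in $t$ of $\exp(itD_p)$ with those of the degree-one trigonometric polynomial $(1-\phi(p))+e^{it}\phi(p)$, and using that spectral projections of $D_p$ have norm $0$ or $1$, I would conclude that for small $\e$ the spectrum of $D_p$ lies in $\{0,1\}$; that is, $\pi(u_p(e^{it}))=(1-E_p)+e^{it}E_p$ for a genuine projection $E_p$, and $\phi(p)$ agrees with $E_p$ up to $O(\e)$. Taking $p$ to be a diagonal matrix unit $e_{kk}$ in block $i$ forces every coordinate of every weight of $V_{\lambda^{(i)}}$ into $\{0,1\}$, so $V_{\lambda^{(i)}}\cong\wedge^{k_i}\CC^{n_i}$; taking $p=1_i$ bounds the total degree, giving $k_i\in\{0,1\}$. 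Thus each factor $V_{\lambda^{(i)}}$ is trivial or standard.

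Two surviving possibilities must still be excluded, and this is where I expect the real difficulty to lie. A constituent $W$ on which every factor acts trivially is killed by homogeneity alone: there $\zeta 1_F$ acts as $\mathrm{id}_W$ for all $\zeta$, while $\phi(\zeta 1_F)\approx\zeta\phi(1_F)\approx\zeta\,1_G$, so $\zeta=-1$ gives $\mathrm{id}_W\approx-\mathrm{id}_W$, impossible. The genuinely delicate case is a constituent $W$ standard on two distinct blocks $i_1\ne i_2$, e.g.\ $u\mapsto u_{i_1}\otimes u_{i_2}$: this is an honest group homomorphism that passes every weight test above, so it can only be detected by the \emph{additivity} of $\phi$. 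I would test $g=1_F-2\cdot 1_{i_1}$ and $h=1_F-2\cdot 1_{i_2}$, for which $\pi(g)|_W=\pi(h)|_W=-\mathrm{id}_W$ but $\pi(gh)|_W=+\mathrm{id}_W$. Expanding $\phi(gh)$ by a bounded number of additivity and homogeneity steps gives $\phi(gh)\approx 1_G-2\phi(1_{i_1})-2\phi(1_{i_2})$ up to $O(\e)$, and since $\phi(1_{i_1})|_W\approx\phi(1_{i_2})|_W\approx\mathrm{id}_W$ (from $E_{1_{i_1}}|_W=E_{1_{i_2}}|_W=\mathrm{id}_W$) this is $\approx-3\,\mathrm{id}_W$, contradicting $\pi(gh)|_W=+\mathrm{id}_W$ for small $\e$. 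The mechanism is that $\pi$ multiplies the two sign-flips on a tensor product whereas the linear $\phi$ can only add their effects; the hypothesis thereby records exactly the failure of $u\mapsto u_{i_1}\otimes u_{i_2}$ to be additive. I regard this tensor-product case as the main obstacle, precisely because such constituents are morphisms and hence invisible to the representation-theoretic weight analysis.

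Finally I would reassemble. Each constituent $W_\alpha$ of each $\pi_j$ is now equivalent, as an $\SU(F)$-representation, to the standard representation of a single block $i(\alpha)$, via a unitary intertwiner $U_\alpha\colon W_\alpha\to\CC^{n_{i(\alpha)}}$ with $\pi(u)|_{W_\alpha}=U_\alpha^*u_{i(\alpha)}U_\alpha$. Setting $\psi(a)|_{W_\alpha}=U_\alpha^*a_{i(\alpha)}U_\alpha$ for $a=(a_i)_i\in F$ and summing over $\alpha$ produces a $^*$-homomorphism that is block-diagonal for $H=\bigoplus_j\CC^{m_j}$, hence maps $F$ into $G$, and that satisfies $\psi(u)=\pi(u)$ for all $u\in\SU(F)$ by construction. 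Thus $\psi$ extends $\pi$, completing the argument; moreover, since all error constants above are absolute, the threshold on $\e$ is uniform in $F$ and $G$.
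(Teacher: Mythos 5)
Your proof is correct (modulo routine constant-chasing), but it takes a genuinely different route from the paper's. The paper's proof of Lemma~\ref{thm:unitary-lift} is Farah's original argument with corrections: working with matrix algebras, for each self-adjoint unitary $a\in F$ one applies Stone's theorem to the periodic one-parameter group $r\mapsto\pi(\expn(ira))$ to obtain a self-adjoint unitary $\rho(a)$ with $\pi(\expn(ira))=\expn(ir\rho(a))$, sets $\rho(p)=\frac{1}{2}(1-\rho(1-2p))$ for projections $p$, and then builds the extension directly from the recovered system of projections/matrix units. Your core spectral step is the same mechanism in different clothing: comparing $\pi$ on the one-parameter group $u_p(e^{it})$ with the degree-one trigonometric polynomial $(1-\phi(p))+e^{it}\phi(p)$ forces the generator's spectrum into $\{0,1\}$, which is exactly what makes Farah's $\rho(a)$ a self-adjoint unitary. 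Everything downstream, however, is different: you never reconstruct matrix units, and instead classify the irreducible constituents of $\pi$ by weight theory (each factor is $\wedge^{k_i}\CC^{n_i}$ with $k_i\in\{0,1\}$), use the hypothesis twice more to exclude the all-trivial constituents (homogeneity at $\zeta=-1$) and those standard on at least two blocks (your additivity test with $g,h$, which, as one should note, also disposes of constituents standard on three or more blocks), and then assemble $\psi$ from intertwiners. What your route buys is conceptual transparency and self-containedness: it exhibits precisely which continuous homomorphisms $\SU(F)\to\SU(G)$ fail to extend (trivial summands, determinant-type characters, conjugates, cross-block tensor products) and shows that the existence of the approximating $\phi$ is exactly what rules these out; it also treats multi-block $F$ and $G$ directly rather than reducing to matrix algebras. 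What the paper's route buys is brevity (it defers to Farah's published argument) and more elementary prerequisites, avoiding highest-weight theory. One small repair to your write-up: $\phi(gh)$ and $\phi(1_{i_j})$ need not preserve the constituent $W$, so the final contradiction should be run with compressions $P_W(\cdot)P_W$, or after first replacing these elements by $\pi(gh)$ and $E_{1_{i_j}}$, which do commute with $P_W$; this costs nothing since $\norm{P_W X P_W}\le\norm{X}$.
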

The proof can be taken verbatim from \cite[p. 22]{Farah.C}, as long as a few modifications are provided to make Farah's argument correct. Let us assume that $F,G$ are matrix algebras, and consider a self-adjoint unitary $a\in F$. Then, applying Stone's Theorem to the 1-parameter group $\pi(\exp^{ira})$, for $r\in\er$, to find a self-adjoint unitary $\rho(a)\in G$ such that $\pi(\expn(ira))=\expn(ir\rho(a))$, for $r\in\er$. (Note that in Farah's paper $a$ is not required to be self-adjoint, and therefore $\rho(a)$ might not be unique). It is still possible to show that $\rho(1_F)=1_G$ and, given a projection $p$ and the corresponding self-adjoint unitary $u=1-2p$, it is possible to define \[\rho(p)=\frac{1-\rho(u)}{2}.\] The proof from this point goes as in Farah's Theorem, eliminating the part where is stated that $\expn(iru)=\expn^{ir}u$, as this slight incorrectness is not necessary for the success of the proof.

This lemma, interesting on its own right, leads to a different proof of Theorem \ref{thm:ulam-stability}: take a faithful representation $\sigma\colon A\to\mathcal B(H)$, and let $\pi\colon\mathcal U(F)\to\mathcal B(H)$ be a a continuous group homomorphism approximating $\sigma\circ f$ on the unitary group as before. Letting $H=\bigoplus H_k$, with $H_k$ finite-dimensional and $\pi\rs H_k$ irreducible the decomposition provided by Peter-Weyl's Theorem, we have that for every $k$, $\pi\rs H_k$ respects the hypothesis of Lemma \ref{thm:unitary-lift}. Hence, for every $k$, there is a $^*$-homomorphism $\psi_k \colon F\to \B(H_k)$ which extends $\pi\rs H_k$. Defining $\psi' = \bigoplus \psi_k$ and $C=\psi'[F]$ we can proceed as in the original proof.

\section{Further results: inductive limits and von Neumann algebra}

Given a class of unital nuclear C*-algebras $\mathcal C$, let $\mathcal D_\mathcal C$ be the class of all unital inductive limits of C*-algebras in $\mathcal C$.
In formulas, $A\in\mathcal D_\mathcal C$ if and only if there are $A_\lambda\in\mathcal C$, for $\lambda\in\Lambda$, with
\begin{itemize}
\item $A_\lambda\subseteq A_\mu$ for every $\lambda<\mu\in\Lambda$, where the inclusion is unital;
\item $\overline{\bigcup_{\lambda\in\Lambda}}A_\lambda=A$.
\end{itemize}
(for example, if $\mathcal C$ denotes the class of full matrix algebras, $\mathcal D_\mathcal C$ is the class of all separable UHF algebras. If $\mathcal C=\mathcal F$, then $\mathcal D_\mathcal C=\mathcal {AF}$.)

The goal of this section is to prove the following Theorem:
\begin{theorem}\label{thm:limits}
Let $\mathcal C$ be a class of unital nuclear C*-algebras. If $(\mathcal C,\mathcal M)$ is Ulam stable, so is $(\mathcal D_{\mathcal C},\mathcal M)$.
\end{theorem}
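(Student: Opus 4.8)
The plan is to prove that Ulam stability passes from a class of nuclear C*-algebras $\mathcal C$ to its inductive-limit closure $\mathcal D_{\mathcal C}$, using crucially that the range is a von Neumann algebra $M$. Given $\e > 0$, I would first choose $\delta > 0$ witnessing Ulam stability for the pair $(\mathcal C, \mathcal M)$ against a carefully chosen tolerance (something summing to $\e$, e.g.\ a geometric schedule $\e/2, \e/4, \ldots$), then show this same $\delta$ (perhaps shrunk) works for $(\mathcal D_{\mathcal C}, \mathcal M)$. So fix $A = \overline{\bigcup_\lambda A_\lambda} \in \mathcal D_{\mathcal C}$ with each $A_\lambda \in \mathcal C$ and unital inclusions $A_\lambda \subseteq A_\mu$, fix $M \in \mathcal M$, and fix a $\delta$-$^*$-homomorphism $\phi \colon A \to M$.

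The core idea is to build a $^*$-homomorphism on $A$ as a limit of $^*$-homomorphisms on the $A_\lambda$, where the von Neumann algebra structure lets me extract a \emph{coherent} limit rather than just a sequence of unrelated approximants. First I would restrict $\phi$ to each $A_\lambda$: since $A_\lambda \in \mathcal C$ and $\phi \rs A_\lambda$ is a $\delta$-$^*$-homomorphism into $M \in \mathcal M$, Ulam stability for $(\mathcal C, \mathcal M)$ yields an honest $^*$-homomorphism $\psi_\lambda \colon A_\lambda \to M$ with $\norm{\psi_\lambda - \phi \rs A_\lambda}$ small. The difficulty is that these $\psi_\lambda$ need not cohere: $\psi_\mu \rs A_\lambda$ need not equal $\psi_\lambda$. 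This is exactly where the weak-$*$-closedness of $M$ enters. I would pass to a cofinal chain (assuming $A$ separable, a countable increasing chain $A_0 \subseteq A_1 \subseteq \cdots$ suffices) and take a \emph{weak-$*$ limit point} of the sequence $(\psi_n)$ along an ultrafilter, or more carefully a diagonal/reindexing argument: on each fixed $A_\lambda$, the tails $\psi_n \rs A_\lambda$ are uniformly close to $\phi \rs A_\lambda$, hence lie in a bounded (weak-$*$-compact) subset of $M$, so a subnet converges pointwise in weak-$*$ topology to some map $\psi \rs A_\lambda$, and these limits automatically agree on overlaps.

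The key step is verifying that the resulting limit map $\psi$ is a genuine $^*$-homomorphism and remains $\e$-close to $\phi$. Each $\psi_n$ is multiplicative and $^*$-preserving on $A_n$; I would need that the multiplicative and adjoint identities survive the weak-$*$ limit. Linearity and the $^*$-operation are weak-$*$ continuous, so those pass immediately; multiplicativity is the delicate point, since multiplication is only \emph{separately} weak-$*$ continuous on bounded sets. For fixed $a, b \in \bigcup_n A_n$, both $a$ and $b$ eventually lie in a common $A_n$, so along the net $\psi(ab)$ is the weak-$*$ limit of $\psi_n(ab) = \psi_n(a)\psi_n(b)$; using separate continuity together with norm-boundedness of the $\psi_n(a)$, I can pass to the limit in each factor successively to conclude $\psi(ab) = \psi(a)\psi(b)$. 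Then $\psi$ is a $^*$-homomorphism on the dense $^*$-subalgebra $\bigcup_n A_n$, and the norm estimate $\norm{\psi - \phi \rs A_n} \le \e$ (inherited by weak-$*$ lower semicontinuity of the norm from the uniform bounds on the $\psi_n$) shows $\psi$ is bounded and uniformly continuous, hence extends to all of $A$.

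The main obstacle I expect is organizing the limit so that coherence and multiplicativity hold \emph{simultaneously}: a naive ultrafilter limit kills coherence across the chain unless the approximants on larger algebras are chosen compatibly, while forcing compatibility by hand risks destroying the uniform closeness bounds that the geometric schedule was meant to protect. The cleanest route is likely to choose the tolerances for $\psi_n$ summing to $\e$, take a single weak-$*$ cluster point over the whole sequence (not separately on each $A_\lambda$), and exploit that a weak-$*$ limit of maps agreeing asymptotically with the fixed $\phi$ on each finite level is automatically consistent. I would also need to confirm that weak-$*$ convergence interacts correctly with the operator norm estimate we want for $\norm{\psi - \phi}$ — here lower semicontinuity of the norm under weak-$*$ limits gives the bound for free, which is precisely the reason the hypothesis $M \in \mathcal M$ cannot be dropped.
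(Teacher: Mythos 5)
Your first half tracks the paper's proof closely: restrict $\phi$ to each $A_\lambda$, apply Ulam stability of $(\mathcal C,\mathcal M)$ to get $^*$-homomorphisms $\psi_\lambda$ close to $\phi\rs A_\lambda$, and take a pointwise WOT/ultrafilter limit, which exists because bounded balls of $M$ are weak-$*$ compact. (Your worry about coherence is a red herring in this scheme: one never needs $\psi_\mu\rs A_\lambda=\psi_\lambda$, only that every $\psi_\lambda$ is uniformly close to the single fixed map $\phi$, and the paper's Proposition~\ref{lem:closeness} then controls the limit.) The genuine gap is your multiplicativity step. Multiplication on a von Neumann algebra is only \emph{separately} WOT (or weak-$*$) continuous, and separate continuity does not allow passing to the limit when \emph{both} factors vary along the same net, even with uniform norm bounds: if $s$ is the unilateral shift, then $s^n\to 0$ in WOT while $(s^n)^*s^n=1$ for all $n$, so the WOT limit of a product is in general not the product of the WOT limits. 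Your proposal to ``pass to the limit in each factor successively'' computes an iterated limit, which is not the limit of $\psi_n(a)\psi_n(b)$ along the net; indeed, a pointwise WOT limit of $^*$-homomorphisms need not be multiplicative at all. A telltale sign that something is wrong: your argument never uses nuclearity of the algebras in $\mathcal C$, so if it worked it would prove the theorem without that hypothesis, which the paper explicitly flags as essential.

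The paper avoids this by demanding much less of the limit: the ultrafilter limit $\tilde\psi$ is used only to \emph{linearize} $\phi$. Since $\tilde\psi$ is within $\eta$ of $\phi$ on the unit ball of $\bigcup A_\lambda$, it is a linear, bounded (in fact completely positive and contractive) map that is $4\eta$-multiplicative --- no exact multiplicativity is claimed. The passage from this \emph{linear} approximate $^*$-homomorphism to a genuine $^*$-homomorphism is then accomplished by Johnson's AMNM theorem in the form of Theorem~\ref{thm:johnrevised}, and this is precisely where nuclearity (amenability) of $A$ and the dual-Banach-algebra structure of $M$ enter, with the final estimate $\norm{\phi-\psi'}<16K\eta^{1/2}$. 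If you keep your limit construction but drop the claim that the limit is itself multiplicative, and instead append this perturbation step, your argument becomes the paper's proof.
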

The first part of the following corollary is proved by combining Theorem \ref{thm:ulam-stability} and Theorem
\ref{thm:limits}.  The second part can be seen from the proof of Theorem \ref{thm:limits}.
\begin{corollary}\label{cor:AF}
$(\mathcal {AF},\mathcal M)$ is Ulam stable. Moreover there is $K$ such that whenever $A\in\mathcal{AF}$,
$M\in\mathcal{M}$, $\e>0$, and $\phi : A\to M$ is an $\e$-$^*$-homomorphism, there is a $^*$-homomorphism $\psi\colon
A\to M$ with $\norm{\phi-\psi}<K\e^{1/4}$.
\end{corollary}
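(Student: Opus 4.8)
The plan is to read both assertions off Theorem~\ref{thm:ulam-stability} and Theorem~\ref{thm:limits}. For the qualitative statement I would first note that every finite-dimensional C*-algebra is unital and nuclear, so $\mathcal F$ is a class of the kind to which Theorem~\ref{thm:limits} applies, and that by the computation recorded just after the definition of $\mathcal D_{\mathcal C}$ one has $\mathcal D_{\mathcal F} = \mathcal{AF}$. Since $\mathcal M \subseteq \mathcal C^*$, Theorem~\ref{thm:ulam-stability} shows in particular that $(\mathcal F, \mathcal M)$ is Ulam stable: the $\delta$ it produces for a given $\e$ is uniform over all C*-algebras allowed in the range, hence in particular over all $M \in \mathcal M$. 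Taking $\mathcal C = \mathcal F$ in Theorem~\ref{thm:limits} then gives that $(\mathcal D_{\mathcal F}, \mathcal M) = (\mathcal{AF}, \mathcal M)$ is Ulam stable, which is the first assertion.

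For the quantitative bound I would track constants through the proof of Theorem~\ref{thm:limits}. The only information that argument needs about $(\mathcal F, \mathcal M)$ is a stability modulus, and Theorem~\ref{thm:ulam-stability} supplies the explicit one: any $\e$-$^*$-homomorphism from an $F \in \mathcal F$ into an $M \in \mathcal M$ lies within $K\e^{1/2}$ of an honest $^*$-homomorphism. Writing $A = \overline{\bigcup_{\lambda} A_\lambda}$ with $A_\lambda \in \mathcal F$ and restricting $\phi$ to each level yields $^*$-homomorphisms $\psi_\lambda \colon A_\lambda \to M$ with $\norm{\phi\rs A_\lambda - \psi_\lambda} < K\e^{1/2}$. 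The work of Theorem~\ref{thm:limits} is to reconcile these into a single $^*$-homomorphism on $A$: for $\lambda < \mu$ the maps $\psi_\mu\rs A_\lambda$ and $\psi_\lambda$ are two $^*$-homomorphisms of a finite-dimensional algebra into $M$ at distance $O(\e^{1/2})$, and their coherence is enforced by unitaries of $M$ implementing their equivalence. By a near-inclusion estimate of Kadison--Kastler type---exactly the phenomenon behind the appeal to \cite[Theorem~5.3]{Christensen.NI} in the main proof---such an implementing unitary may be chosen within $O\big((\e^{1/2})^{1/2}\big) = O(\e^{1/4})$ of the identity. This single additional square root is what turns the exponent $1/2$ of Theorem~\ref{thm:ulam-stability} into the exponent $1/4$ of the statement, and collecting the constants gives the uniform $K$.

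I expect the genuine obstacle to be the reconciliation step rather than the bookkeeping: one must check that the level-wise unitary corrections assemble into an actual $^*$-homomorphism defined on all of $A$, not merely on the dense subalgebra $\bigcup_\lambda A_\lambda$, while keeping the global distance to $\phi$ of order $\e^{1/4}$. This is precisely where the weak-$*$ closedness of $M$ is indispensable---one passes to a weak-$*$ limit of the adjusted maps, as flagged in the introduction---and it is isolated in Theorem~\ref{thm:limits}. For the corollary itself that difficulty is already absorbed into Theorem~\ref{thm:limits}; all that is left is to confirm that $\mathcal F$ satisfies the nuclearity hypothesis and to insert the exponent $1/2$ coming from Theorem~\ref{thm:ulam-stability}, both of which are routine.
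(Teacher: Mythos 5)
Your first paragraph is exactly the paper's argument for the qualitative statement: $\mathcal F$ consists of unital nuclear algebras, $\mathcal D_{\mathcal F}=\mathcal{AF}$, Theorem~\ref{thm:ulam-stability} gives Ulam stability of $(\mathcal F,\mathcal M)$ since $\mathcal M\subseteq\mathcal C^*$, and Theorem~\ref{thm:limits} with $\mathcal C=\mathcal F$ finishes. No complaints there.

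The quantitative half, however, attributes the exponent $1/4$ to the wrong mechanism, and the mechanism you describe has a genuine gap. In the paper's proof of Theorem~\ref{thm:limits} there is no reconciliation of the level maps by intertwining unitaries at all. The maps $\psi_\lambda$ (each within $\eta\approx K\e^{1/2}$ of $\phi\rs A_\lambda$, by Theorem~\ref{thm:ulam-stability}) are extended by zero and one takes a WOT-ultralimit $\tilde\psi$ along an ultrafilter on $\Lambda$. This limit is \emph{not} a $^*$-homomorphism --- multiplication is not jointly WOT-continuous, so multiplicativity of the $\psi_\lambda$ is simply lost in the limit; what survives (via Proposition~\ref{lem:closeness}) is that $\tilde\psi$ is linear, contractive, and within $\eta$ of $\phi$, hence approximately multiplicative because $\phi$ is. The genuine $^*$-homomorphism is then produced by Johnson's AMNM theorem for linear approximate homomorphisms into von Neumann algebras (Theorem~\ref{thm:johnrevised}, i.e.\ \cite[Theorem~7.2]{Johnson.AMNM}), and it is \emph{that} theorem's $\eta\mapsto K\eta^{1/2}$ loss, applied with $\eta\approx\e^{1/2}$, which yields $K\e^{1/4}$. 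Your proposal never invokes Johnson's theorem, which is the actual content of the second half of the corollary.

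The intertwining scheme you propose in its place does not work with these error bounds. The unitaries conjugating $\psi_\mu\rs A_\lambda$ onto $\psi_\lambda$ have size of order $\e^{1/4}$ \emph{uniformly} in $\lambda$ and $\mu$; to make an infinite increasing chain of levels coherent you must compose these corrections, and since the errors are not summable along the chain, the accumulated correction (and the distance of the adjusted maps from $\phi$) grows without bound --- there is no convergent intertwining. Indeed, if one could choose the corrections so as to assemble coherently with a uniform $O(\e^{1/4})$ bound, one would essentially have solved the Phillips--Raeburn problem \cite{PhR.AF} of controlling the unitary in terms of the Kadison--Kastler distance, which the paper emphasizes (see the discussion around Corollary~\ref{cor:ulamvsperturb}) is open. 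Your fallback remark --- ``pass to a weak-$*$ limit of the adjusted maps'' --- does not repair this, for the reason above: a WOT limit of $^*$-homomorphisms need not be multiplicative, so after taking the limit you are back to needing an AMNM-type theorem anyway. The weak-$*$ closedness of $M$ is indispensable, but its role is to make the ultralimit exist and to feed a \emph{linear} approximate homomorphism into \cite[Theorem~7.2]{Johnson.AMNM}, not to glue unitary conjugates.
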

It should be pointed out that we do not require, in the statement of Theorem \ref{thm:limits}, the $\e$-$^*$-homomorphisms
to be $\delta$-isometric, for any $\delta$.

We will make use of a small proposition and of a consequence of \cite[Theorem 7.2]{Johnson.AMNM}:
\begin{proposition}\label{lem:closeness}
Let $M$ be a von Neumann algebra and $x\in M$, $Y\subseteq M$ such that $\norm{x - y} \le \e$ for all $y\in Y$.  If
$z$ is any WOT-accumulation point of $Y$, then $\norm{x - z} \le \e$.
\end{proposition}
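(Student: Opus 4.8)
The plan is to exploit the fact that, although the weak operator topology is far coarser than the norm topology, norm-closed balls remain WOT-closed because the norm of an operator is a supremum of WOT-continuous vector functionals. Concretely, I would fix a Hilbert space $H$ on which $M$ acts, so that the WOT on $M$ is inherited from $\mathcal B(H)$, and recall two standard facts: for each pair of vectors $\xi,\eta\in H$ the map $w\mapsto \langle w\xi,\eta\rangle$ is WOT-continuous, and $\norm{w}=\sup\{|\langle w\xi,\eta\rangle| : \norm{\xi},\norm{\eta}\le 1\}$.

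First I would argue by contradiction. Suppose $\norm{x-z}>\e$. By the supremum description of the norm there are unit vectors $\xi,\eta\in H$ with $|\langle (x-z)\xi,\eta\rangle|>\e$, and after replacing $\eta$ by a suitable unit scalar multiple we may assume $\operatorname{Re}\langle (x-z)\xi,\eta\rangle>\e$. Since $w\mapsto \operatorname{Re}\langle(x-w)\xi,\eta\rangle$ is WOT-continuous, the set $U=\{w\in M : \operatorname{Re}\langle(x-w)\xi,\eta\rangle>\e\}$ is a WOT-open neighbourhood of $z$.

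Because $z$ is a WOT-accumulation point of $Y$, this neighbourhood must meet $Y$; so I would pick $y\in Y\cap U$. Then $\norm{x-y}\ge |\langle (x-y)\xi,\eta\rangle|\ge \operatorname{Re}\langle(x-y)\xi,\eta\rangle>\e$, contradicting the hypothesis that $\norm{x-y}\le\e$ for every $y\in Y$. Hence $\norm{x-z}\le\e$, as desired.

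Equivalently, one may phrase the whole argument as the observation that $\{w:\norm{x-w}\le\e\}=\bigcap_{\norm{\xi},\norm{\eta}\le1}\{w:|\langle(x-w)\xi,\eta\rangle|\le\e\}$ is an intersection of WOT-closed sets, hence WOT-closed; since it contains $Y$ it contains the WOT-closure of $Y$, and in particular the accumulation point $z$. There is no serious obstacle here, since the proposition is really a soft functional-analytic fact; the only points requiring minor care are the passage from $\norm{x-z}>\e$ to a \emph{single} witnessing pair of vectors through a WOT-continuous functional (choosing the correct phase so that a real-part inequality survives) and the observation that WOT-accumulation points lie in the WOT-closure.
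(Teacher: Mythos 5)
Your proof is correct. The paper actually states this proposition without any proof, treating it as a standard fact, so there is no argument to compare against; your reasoning --- that the norm ball $\{w : \norm{x-w}\le\e\}$ is WOT-closed because the operator norm is a supremum of WOT-continuous vector functionals $w\mapsto\langle w\xi,\eta\rangle$, hence contains the WOT-closure of $Y$ and in particular $z$ --- is the standard argument and correctly fills the omission (both your contradiction version with the phase-adjusted real part and your one-line closed-ball version are sound).
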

\begin{theorem}\label{thm:johnrevised}
There is $K$ such that for any unital, nuclear C*-algebra $A$, von Neumann algebra $M$,
$\e>0$ and for any linear $\e$-$^*$-homomorphism $\phi\colon A\to M$, there is a $^*$-homomorphism $\psi$ with
$\norm{\phi-\psi}<K\e^{\frac{1}{2}}$. 
\end{theorem}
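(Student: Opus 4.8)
The plan is to reduce to Johnson's theorem and then repair the $^*$-structure. Since $\phi$ is assumed \emph{linear}, the additivity and scalar-homogeneity clauses of Definition~\ref{def:approximate-homo} are automatic, so the content of ``$\e$-$^*$-homomorphism'' collapses to the three estimates $\norm{\phi(xy)-\phi(x)\phi(y)}\le\e$, $\norm{\phi(x^*)-\phi(x)^*}\le\e$, and $\norm{\phi(x)}\le 1+\e$ on the unit ball. As in Remark~\ref{rmk:unital-contractive} I would first replace $\phi$ by $\phi/\norm{\phi}$ to arrange $\norm{\phi}\le 1$ at a cost of at most $\e$ in the final estimate, so that $\phi$ becomes a bounded, approximately multiplicative linear map.

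The core input is \cite[Theorem 7.2]{Johnson.AMNM}: because $A$ is nuclear it is amenable, and because $M$ is a von Neumann algebra it is a dual Banach algebra, so the pair $(A,M)$ is AMNM with a quantitative bound. Applying this to the $\e$-multiplicative linear map $\phi$ should yield an (algebra) homomorphism $\theta\colon A\to M$ with $\norm{\phi-\theta}\le C_1\e$. Since $\theta$ is close to $\phi$ and $\phi$ is approximately self-adjoint, $\theta$ is itself approximately self-adjoint, say $\norm{\theta(x^*)-\theta(x)^*}\le C_2\e$ for all $x\in A_{\le 1}$; thus the only obstruction separating $\theta$ from the desired $\psi$ is this self-adjointness defect.

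The remaining, and main, step is to upgrade the algebra homomorphism $\theta$ to a genuine $^*$-homomorphism close to it. Fixing a faithful normal representation $M\subseteq\B(H)$, I regard $\theta$ as a bounded unital representation of the nuclear C*-algebra $A$ on $H$. Such a representation is unitarizable, so there is an invertible $T$ with $T\theta(\cdot)T^{-1}$ a $^*$-representation; the point is to choose $T$ inside the von Neumann algebra generated by the image (so that conjugation keeps us in $M$) and close to $1$. Concretely, I would seek a positive invertible $T$ with $T^2$ intertwining $\theta$ and the homomorphism $\theta^\sharp(x)=\theta(x^*)^*$, that is $T^2\theta(x)=\theta^\sharp(x)T^2$; since $\theta$ and $\theta^\sharp$ differ by at most $C_2\e$, one should be able to take $T$ with $\norm{T-1}\le C_3\e^{1/2}$, and then $\psi(x)=T\theta(x)T^{-1}$ is a $^*$-homomorphism into $M$ with $\norm{\psi-\theta}\le C_4\e^{1/2}$. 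Combining this with $\norm{\phi-\theta}\le C_1\e$ yields $\norm{\phi-\psi}\le K\e^{1/2}$ with $K$ independent of $A$, $M$, and $\phi$.

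I expect the self-adjoint upgrade to be the hard part. Unlike the finite-dimensional situation of Theorem~\ref{thm:ulam-stability}, the unitary group of $A$ is noncompact, so the Haar-averaging of Proposition~\ref{proposition:approximate-multiplicativity} is unavailable; it is exactly here that the hypothesis that $M$ be a von Neumann algebra is used, via its dual (WOT-compact) structure, to manufacture the intertwiner $T^2$ as an accumulation point of approximate intertwiners (cf. the argument of Proposition~\ref{lem:closeness}). The square-root loss, characteristic of passing from a near-self-adjointness estimate to an implementing similarity near the identity, is what produces the $\e^{1/2}$ exponent and is not expected to improve to a linear bound in general.
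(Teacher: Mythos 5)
Your overall architecture is reasonable, but the step you yourself call ``the hard part'' is exactly where the proposal has a genuine gap, and it is not one that soft arguments close. You assert that, because $\norm{\theta-\theta^\sharp}\le C_2\e$, ``one should be able to'' find a positive invertible $T$ with $T^2\theta(x)=\theta^\sharp(x)T^2$ and $\norm{T-1}\le C_3\e^{1/2}$, obtained as a WOT-accumulation point of approximate intertwiners. But closeness of $\theta$ and $\theta^\sharp$ together with WOT-compactness of bounded sets in $M$ does not produce such a $T$: a WOT-cluster point of operators that intertwine only up to $\e$ still only intertwines up to $\e$ (that is precisely what Proposition~\ref{lem:closeness} gives --- error bounds survive the limit, they do not vanish in it). What you need is a net $S_\lambda\in M$, uniformly close to $1$, whose intertwining defect tends to zero \emph{pointwise in} $x$, and producing that net is the whole content of the step; it requires a second use of amenability, which your sketch never invokes. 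Concretely: take an approximate diagonal $m_\lambda=\sum_i a_i^\lambda\otimes b_i^\lambda$ for $A$ of projective norm at most $1$ (this is where $1$-amenability, i.e.\ nuclearity, enters) and set $S_\lambda=\sum_i\theta^\sharp(a_i^\lambda)\theta(b_i^\lambda)\in M$. Then $\norm{S_\lambda\theta(x)-\theta^\sharp(x)S_\lambda}\le\norm{\theta}\norm{\theta^\sharp}\norm{x\cdot m_\lambda-m_\lambda\cdot x}\to 0$ for each $x$, while $\norm{S_\lambda-1}\le\norm{\theta^\sharp-\theta}\norm{\theta}\norm{m_\lambda}+\norm{\theta(\sum_i a_i^\lambda b_i^\lambda)-1}\lesssim C_2\e$; a WOT-cluster point $S$ lies in $M$, intertwines exactly, and after replacing $S$ by $(S+S^*)/2$ (also an intertwiner, as one sees by taking adjoints of the intertwining relation) one takes $T=S^{1/2}$. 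Two consequences are worth noting. First, this construction is what keeps $T$ inside $M$; your alternative appeal to unitarizability of bounded representations of nuclear C*-algebras yields a similarity in $\SB(H)$ with no reason to lie in $M$, so conjugating by it could leave the von Neumann algebra entirely. Second, it gives $\norm{T-1}=O(\e)$, not $O(\e^{1/2})$, so your heuristic that the square-root loss is ``characteristic'' of the self-adjointness repair is backwards: whatever square root appears is inherited from Johnson's theorem, not created at this step.

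You should also know that your route is much more laborious than the paper's, which does not re-prove the involutive part at all. Johnson's Theorem~7.2 in \cite{Johnson.AMNM} is already stated for a class of Banach $*$-algebras mapping into von Neumann algebras and already produces a $^*$-homomorphism, with a constant depending only on the best norm of an approximate diagonal in $A\hat\otimes A$; the paper's entire proof is the bookkeeping observation that every C*-algebra is $1$-amenable, so this constant is uniform over all nuclear $A$. Your proposal in effect uses only the non-involutive AMNM statement (Johnson's Theorem~3.1) and then re-derives the involutive upgrade by hand --- a legitimate alternative route, and one that can be completed with the approximate-diagonal intertwiner above, but as written its decisive step is asserted rather than proved, and the quantitative bound $\norm{\phi-\theta}\le C_1\e$ you attribute to Johnson also needs justification, since the AMNM statements are qualitative and the explicit modulus must be extracted from the proofs.
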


\begin{remark}
  Johnson's theorem~\cite[Theorem~7.2]{Johnson.AMNM} is more general, as it applies to a class of Banach $*$-algebras
  which does not include just C*-algebras.  However, in this context the constant $K$ depends on the constant of
  amenability of $A$, as it depends on the best possible norm of an approximate diagonal in $A\hat\otimes A$. Since
  every C*-algebra is $1$-amenable (see, for example, \cite{Runde.LA}), our version follows.
\end{remark}

The proof of Theorem \ref{thm:limits} relies heavily on the fact that the range algebra, being a von Neumann algebra, is a dual Banach algebra. The assumption of nuclearity for elements of the class $\mathcal C$ is crucial due to the application of \cite[Theorem 3.1]{Johnson.AMNM}.

\begin{proof}[Proof of Theorem \ref{thm:limits}]
Let $\epsilon>0$, $A\in\mathcal D_{\mathcal C}$, $M\in\mathcal M$ and fix $\{A_\lambda\}_{\lambda\in\Lambda}$ be a directed system of algebras in $\SC$ with direct limit $A$. Fix a nonprincipal ultrafilter $\mathcal U$ on $\Lambda$ and let $\eta=\frac{\epsilon^2}{K^2}$ where $K$ is given by Theorem \ref{thm:johnrevised}.

As $(\mathcal C,\mathcal M)$ is Ulam stable by hypothesis, we can fix $\delta$ such that whenever $C\in\SC$,
$M\in\mathcal M$, and $\phi\colon C\to M$ is a $\delta$-$^*$-homomorphism, there is a $^*$-homomorphism $\psi$ with
$\norm{\psi-\phi}<\eta$. Let $\rho\colon A\to M$ be a $\delta$-$^*$-homomorphism. Then, for every $\lambda\in\Lambda$, there is a $^*$-homomorphism $\psi_\lambda$ with $\psi_\lambda\colon A_\lambda\to M$ such that 
\[
\norm{\psi_\lambda-\rho\restriction A_\lambda} < \eta
\] 
We extend each map $\psi_\lambda$ to $\bigcup_{\mu\in\Lambda} A_\mu$, setting $\psi_\lambda(a)=0$ if $a\notin A_\lambda$. Note that for every $a\in \bigcup A_\lambda$ there is $\lambda_0$ such that for all $\lambda\geq \lambda_0$ we have that $\norm{\psi_\lambda(a)-\rho(a)}<\eta$. For every $a\in\bigcup A_\lambda$, define 
\[
\psi(a)=\textrm{WOT}-\lim_{\mathcal U}\psi_\lambda(a)\in M.
\]
 Such a limit exists, since $M$ is a von Neumann algebra and $\norm{\psi_\lambda(a)}\leq\norm{a}$ for every
 $\lambda\in\Lambda$. In particular the map $\psi$ is a continuous, bounded, unital, linear map with domain equal to
 $\bigcup A_\lambda$, so it can be extended to a linear (actually, a completely positive and contractive) map 
\[
\tilde\psi\colon A\to M.
\]
By Lemma~\ref{lem:closeness}, for every $a\in \bigcup A_\lambda$ with $\norm{a} \le 1$, we have $\norm{\tilde{\psi}(a) -
\rho(a)} \le \eta$.  It follows that $\tilde{\psi}$ is $4\eta$-multiplicative.

As $M$ is a von Neumann algebra, and particular a dual Banach algebra, we can now apply \cite[Theorem 7.2]{Johnson.AMNM}
in the version of Theorem \ref{thm:johnrevised} to get a $^*$-homomorphism $\psi'\colon A\to M$ with
\[\norm{\rho-\psi'}<16K\eta^{1/2}=16\epsilon.\] The conclusion follows.
\end{proof}

As promised, we conclude with a connection to the perturbation theory of C*-algebras.  Recall that the Kadison-Kastler distance between two subalgebras of $\mathcal B(H)$ is given by 
\[
  d_{KK}(A,B)=\max\{\sup_{x\in A,\norm{x}=1}\inf_{y\in B,\norm{y}=1}\norm{x-y}, \sup_{x\in B,\norm{x}=1}\inf_{y\in A,\norm{y}=1}\norm{x-y}\}.
\]
Phillips and Raeburn proved, in~\cite{PhR.AF}, that any two AF subalgebras of $\SB(H)$ which are sufficiently close in
the Kadison-Kastler metric must be unitarily conjugate.  However, they were not able to control the distance between the
unitary and the identity operator in terms of the distance between the two AF algebras.  This problem remains open, although
partial results have been obtained in~\cite{CSSWW} (see Theorem~4.3 there).  Below, we give an equivalent condition in terms of approximate maps.
 \begin{corollary}\label{cor:ulamvsperturb}
The following conditions are equivalent:
\begin{enumerate}
\item\label{cond1} For every $\e>0$ there is $\delta>0$ such that whenever $A\in\mathcal{AF}$ and $\phi\colon A\to A$ is a $\delta$-$^*$-isomorphism there is a $^*$-isomorphism $\psi\colon A\to A$ with $\norm{\phi-\psi}<\e$;
\item\label{cond2} For every $\e>0$ there is $\delta>0$ such that whenever $A\in\mathcal{AF}$,  and $A_1,A_2\subseteq \mathcal B(H)$ are
isomorphic copies of $A$ with $d_{KK}(A_1,A_2) < \delta$, there is a $^*$-isomorphism $\phi\colon A_1\to A_2$ with
$\norm{\phi-\mathrm{id}}<\epsilon$.
\end{enumerate}
\end{corollary}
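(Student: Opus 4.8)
The plan is to prove the two implications separately, the easy one by transporting an approximate isomorphism through the abstract identifications of the copies with $A$, and the hard one by using Corollary~\ref{cor:AF} to manufacture a genuine copy of $A$ inside the von Neumann closure. First I would treat \eqref{cond1}$\Rightarrow$\eqref{cond2}. Fix $\e$, let $\e'$ be the constant that \eqref{cond1} associates to $\e/2$, and set $\delta$ small. Given copies $A_1,A_2\subseteq\mathcal B(H)$ of $A\in\mathcal{AF}$ with $d_{KK}(A_1,A_2)<\delta$, I would first turn the near inclusion into an approximate isomorphism: for each norm-one $x\in A_1$ use the $d_{KK}$ estimate to choose a norm-one $\theta(x)\in A_2$ with $\norm{x-\theta(x)}<\delta$, extending $\theta$ to $(A_1)_{\le 1}$. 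Since $\theta$ is $\delta$-close to the inclusion $A_1\hookrightarrow\mathcal B(H)$, which is an honest $^*$-homomorphism, $\theta$ is an $O(\delta)$-$^*$-homomorphism; the $\delta$-isometry is immediate, and the $\delta$-surjectivity comes from the other half of $d_{KK}$, so $\theta$ is an $O(\delta)$-$^*$-isomorphism.

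Choosing abstract (hence isometric) $^*$-isomorphisms $\gamma\colon A\to A_1$ and $\beta\colon A\to A_2$, the composite $\Theta=\beta^{-1}\theta\gamma\colon A\to A$ is again an $O(\delta)$-$^*$-isomorphism, so \eqref{cond1} yields a $^*$-isomorphism $\psi_0\colon A\to A$ with $\norm{\psi_0-\Theta}<\e'$. Transporting back, $\phi:=\beta\psi_0\gamma^{-1}\colon A_1\to A_2$ is a $^*$-isomorphism, and since $\theta=\beta\Theta\gamma^{-1}$ while $\beta,\gamma$ are isometric, one gets $\norm{\phi-\theta}=\norm{\psi_0-\Theta}<\e'$; combined with $\norm{\theta-\mathrm{id}}<\delta$ this gives $\norm{\phi-\mathrm{id}}<\e$ once $\delta$ is chosen small.

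The reverse implication \eqref{cond2}$\Rightarrow$\eqref{cond1} is where the content lies, and here Corollary~\ref{cor:AF} is essential. The difficulty is that an approximate self-isomorphism $\phi\colon A\to A$ does not produce a second copy of $A$ directly, since $\phi(A)$ is not a subalgebra; so I would pass to the von Neumann closure. Represent $A$ faithfully on $H$ and set $M=A''$. Given a $\delta'$-$^*$-isomorphism $\phi\colon A\to A$, view it as a $\delta'$-$^*$-homomorphism into $M$ and apply Corollary~\ref{cor:AF} to obtain a genuine $^*$-homomorphism $\Psi\colon A\to M$ with $\norm{\Psi-\phi}<K\delta'^{1/4}=:c$. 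Because $\phi$ is $\delta'$-isometric and $\Psi$ is $c$-close to it, $\Psi$ is bounded below, hence injective and isometric, so $\Psi(A)\subseteq\mathcal B(H)$ is an honest copy of $A$. I would then check $d_{KK}(A,\Psi(A))<\delta$: for norm-one $x\in A$ the $\delta'$-surjectivity of $\phi$ gives $a$ with $\phi(a)$ near $x$, whence $\Psi(a)$ is near $x$; conversely each $\Psi(a)$ lies within $c$ of $\phi(a)\in A$. After normalizing, both one-sided distances are $O(c+\delta')$, so shrinking $\delta'$ makes this $<\delta$. Applying \eqref{cond2} to the copies $A$ and $\Psi(A)$ produces a $^*$-isomorphism $\chi\colon A\to\Psi(A)$ with $\norm{\chi-\mathrm{id}}<\e/2$, and I would set $\psi:=\chi^{-1}\circ\Psi\colon A\to A$, an honest $^*$-isomorphism. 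Using that $\chi$ is isometric, $\norm{\psi(x)-\phi(x)}=\norm{\Psi(x)-\chi(\phi(x))}\le\norm{\Psi(x)-\phi(x)}+\norm{\phi(x)-\chi(\phi(x))}\le c+\e/2$ for $x\in A_{\le 1}$, and shrinking $\delta'$ so that $c<\e/2$ finishes the argument.

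The main obstacle is precisely the \eqref{cond2}$\Rightarrow$\eqref{cond1} step, namely producing an honest copy of $A$ that is Kadison--Kastler close to $A$: one cannot hope to land back inside $A$ itself, since that is exactly the Phillips--Raeburn perturbation problem, but Corollary~\ref{cor:AF} lets one land inside $A''$, which is all the Kadison--Kastler formulation requires. The remaining work is routine: careful bookkeeping of the approximate-isomorphism constants under composition with the isometric $^*$-isomorphisms $\beta,\gamma,\Psi,\chi$, and the standard normalizations when passing between near inclusions and the distance $d_{KK}$.
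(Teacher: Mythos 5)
Your proof is correct and follows essentially the same route as the paper: for \eqref{cond1}$\Rightarrow$\eqref{cond2} you build a norm-preserving approximate isomorphism from the Kadison--Kastler estimate and perturb it, and for \eqref{cond2}$\Rightarrow$\eqref{cond1} you use Corollary~\ref{cor:AF} to replace the approximate self-isomorphism by an honest $^*$-homomorphism into a von Neumann algebra containing $A$ (you use $A''$ where the paper uses $\mathcal B(H)$, an immaterial difference), observe its image is a Kadison--Kastler close copy of $A$, and compose with the isomorphism supplied by \eqref{cond2}. Your writeup is in fact slightly more careful than the paper's in two spots it glosses over: verifying that $\Psi$ is injective (hence isometric, so $\Psi(A)$ really is a copy of $A$) and using the approximate surjectivity of $\phi$ to bound both sides of the Kadison--Kastler distance.
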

\begin{proof}
For \eqref{cond1} $\implies$ \eqref{cond2}, let $\e>0$ and use condition~\eqref{cond1} with $\e/2$ to choose a $\delta$, with $\delta
\le 2\e$.  Fix an AF algebra $A$ and two isomorphic copies $A_1,A_2\subseteq \mathcal B(H)$ as in
condition~\eqref{cond2}, with $d_{KK}(A_1,A_2)<\delta/4$. Define a map $\phi\colon A_1\to A_2$ assigning to every $x\in
A_1$ some $y\in A_2$ with $\norm{x} = \norm{y}$ and $\norm{x-y}\le\norm{x} \delta/4$. It is routine to verify that
$\phi$ is a $\delta$-$^*$-isomorphism, so, thanks to condition \eqref{cond1} and our choice of $\delta$ it can be
perturbed to a $^*$-isomorphism $\psi\colon A_1\to A_2$ that is $\e/2$-close to $\phi$.  Since $\phi$ is $\e/2$-close to
the identity, we have verified condition \eqref{cond2}.

For \eqref{cond2} $\implies$ \eqref{cond1}, let again $\e>0$ and use condition~\eqref{cond2} to choose $\delta>0$ such that for every
pair of unital, AF subalgebras of $\SB(H)$ which are $\delta$-close in the Kadison-Kastler metric, there is a
$^*$-isomorphism between them that differs from the identity by less than $\e/2$.  We assume that
$\delta \le \e/2$.  We also use Corollary~\ref{cor:AF} to choose $\eta>0$, with $\eta<\delta$, such that for every
unital AF algebra $A$ and every $\eta$-$^*$-homomorphism $\sigma : A\to \SB(H)$, there is a $^*$-homomorphism $\tau :
A\to\SB(H)$ with $\norm{\sigma - \tau} < \delta$.  

Now fix a unital AF algebra $A$ and an $\eta$-$^*$-isomorphism $\phi\colon
A\to A$.  Let $\pi : A\to \SB(H)$ be a faithful representation of $A$.  By factoring through $\pi$, we may define an
$\eta$-$^*$-isomorphism $\phi' : \pi(A)\to \pi(A)$ with $\pi \circ \phi = \phi' \circ \pi$.  By our choice of $\eta$,
there is a $^*$-homomorphism $\psi' : \pi(A)\to \SB(H)$ with $\norm{\phi' - \psi'} < \delta$.  In particular, we have
that $d_{KK}(\pi(A), \psi'(\pi(A))) < \delta$, so by our choice of $\delta$,
there is a $^*$-isomorphism $\sigma : \psi'(\pi(A))\to \pi(A)$ with $\norm{\sigma - \id} <
\e/2$.  Then the map $\psi = \pi^{-1}\circ \sigma\circ \psi'\circ \pi$ is a $^*$-isomorphism from $A$ to $A$ with
$\norm{\phi - \psi} < \e$, as required.

\end{proof}
\bibliographystyle{amsalpha}
\bibliography{library}
\end{document}